\shorttitle}
\@nx\MakeUppercase{\the\toks@}}
\patchcmd\@settitle{\uppercasenonmath\@title}{\Large}{}{}
\authors}
\newtheorem{theorem}{Theorem}[section]
\newtheorem{question}{Question}[section]
\newtheorem{definition}{Definition}[section]
\newtheorem{corollary}{Corollary}[section]
\newtheorem{proposition}{Proposition}[section]
\newtheorem{lemma}{Lemma}[section]
\newtheorem{remark}{Remark}[section]
\newtheorem{example}{Example}[section]
\numberwithin{equation}{section}
\newcommand{\beq}{\begin{eqnarray}}
\newcommand{\eeq}{\end{eqnarray}}
\newcommand{\beqn}{\begin{eqnarray*}}
\newcommand{\eeqn}{\end{eqnarray*}}
\newcommand{\rar}{\rightarrow}
\newcommand*{\Ge}{\geqslant}
\newcommand*{\Le}{\leqslant}
\begin{document}
%------------------------
\author[S. Chavan  \and Chaman Kumar Sahu] {\Large{Sameer Chavan} \and \Large{Chaman Kumar Sahu}}
\address{Department of Mathematics and Statistics\\
Indian Institute of Technology Kanpur, India}
\email{\url{chavan@iitk.ac.in}}
\email{\url{chamanks@iitk.ac.in}}

\subjclass[2010]{Primary 44A60, 26A48; Secondary 30E20, 30B50.}
\keywords{Dirichlet polynomials, moment functional, completely monotone function, Bernstein function}
\date{\today}
%\date{June 24, 2019}

\title[Dirichlet polynomials and a moment problem]
{Dirichlet polynomials and a moment problem}
\maketitle
%------------------------
%----------\thispagestyle{empty}

\begin{abstract} 
%not adapted 
Consider a linear functional $L$ defined on the space $\mathcal D[s]$ of Dirichlet polynomials with real coefficients and the set $\mathcal D_+[s]$ of non-negative elements in $\mathcal D[s].$  An analogue of the Riesz-Haviland theorem in this context asks: What are all $\mathcal D_+[s]$-positive linear functionals $L,$ which are moment functionals? Since the space $\mathcal D[s],$ when considered as a subspace of $C([0, \infty), \mathbb R),$ fails to be an adapted space in the sense of Choquet, the general form of Riesz-Haviland theorem is not applicable in this situation.
%This fact makes the problem above rather intriguing. 
In an attempt to answer the forgoing question, we arrive at the notion of a moment sequence, which we call the Hausdorff log-moment sequence. Apart from an analogue of the Riesz-Haviland theorem, we show that any Hausdorff log-moment sequence is a linear combination of $\{1, 0, \ldots, \}$ and $\{f(\log(n)\}_{n \geqslant 1}$ for a completely monotone function $f : [0, \infty) \rar [0, \infty).$ 
Moreover, such an $f$ is uniquely determined by the sequence in question.
\end{abstract}
\section{Introduction}

 Let $\mathbb Z_+$ denote the set of positive integers and let $\mathbb R$ denote the set of real numbers.  Let $X$ be a locally compact Hausdorff space.
A {\it Radon measure} on $X$ is a locally finite, inner regular Borel measure on $X.$ 
%(the reader is referred to \cite[Chapter~2]{BCR} and \cite[Appendix~A]{Sc} for basic properties of Radon measures). 
The real linear space of continuous functions from $X$ into $\mathbb R$ will be denoted by $C(X, \mathbb R).$   
 For a closed subset $K$ of $X$ and a subspace $\mathcal S$ of $C(X, \mathbb R),$ let $\mathcal S^K_+$ denote the cone of all functions in $\mathcal S$ which are non-negative on $K.$ If $K=X,$ then we use the notation $\mathcal S_{+}$ to denote $\mathcal S^K_{+}.$
 For a subset $\mathcal M$ of $\mathcal S,$ 
a linear functional $L$ on $\mathcal S$ is said to be {\it $\mathcal M$-positive} if $L(q) \geqslant 0$ for all $q \in \mathcal M.$
  For a closed subset $K$ of $X,$ we say that $L$ is a {\it $K$-moment functional} if there exists a positive Radon measure $\nu$ concentrated on $K$ such that 
 \beqn 
L(q) = \int_{K} q(s)\nu(ds), \quad q \in \mathcal S.
 \eeqn
 In this case, we refer to the measure $\nu$ as a {\it representing measure} of $L$ (see \cite{Sc, BCR, Si-1} for the basics of moment theory). 
 
 Let $\mathbb H_0$ denote the right half plane $\{z \in \mathbb C : \Re(z) > 0\}$ in the complex plane $\mathbb C.$ 
 A  {\it Dirichlet polynomial} is a function $q : \overline{\mathbb H}_0 \rar \mathbb C$ given by
\beqn q(s)=\sum_{n=1}^k a_n n^{-s}, \quad s \in \overline{\mathbb H}_0, \eeqn
where $a_n \in \mathbb C$ and $k \in \mathbb Z_+.$ 
 The real linear space of Dirichlet polynomials with real coefficients will be denoted by $\mathcal D[s].$ By the uniqueness of the Dirichlet series, $\mathcal D[s]$ can be embedded into $C([0, \infty), \mathbb R)$ via the map $f \mapsto f|_{[0, \infty)}.$
 Moreover, $\mathcal D[s]$ is a unital sub-algebra of $C([0, \infty), \mathbb R)$
  (the reader is referred to \cite[Chapter~4]{QQ} for algebraic properties of Dirichlet polynomials and Dirichlet series). For a closed subset $K$ of $[0, \infty),$ let $\mathcal D^K_{+}[s]$ denote the set of all  functions in $\mathcal D[s]$ which are non-negative on $K.$ 
 %(although this notation differs from the earlier notation $\mathcal D[s]^K_{+}$, we permit this change for the sake of simplicity).  

For a sequence ${\mathbf w} = \{w_{n}\}_{n\geqslant 1}$ of non-negative real numbers, consider the real linear functional $L_{\mathbf w}: \mathcal D[s] \longrightarrow \mathbb{R}$ defined by setting $L_{\mathbf w}(n^{-s}) = w_{n},$ $n \geqslant 1,$ and extending linearly to $\mathcal D[s].$ 
If $L_{\mathbf w}$ is a $K$-moment functional, then clearly $L_{\mathbf w}$ is $\mathcal D^K_{+}[s]$-positive.
An analogue of Riesz-Haviland theorem asks for the converse: 
 
 \begin{question} \label{Q1}
 If $L_{\mathbf w}$ is a $\mathcal D^K_{+}[s]$-positive linear functional on $\mathcal D[s],$ then whether $L_{\mathbf w}$ is a $K$-moment functional? 
 %there exist a positive Radon measure $\nu$ on $[0, \infty)$ such that \eqref{rep-McC} holds ? 
 \end{question}
 
 In case $K$ is bounded, the answer to Question~\ref{Q1} is affirmative and this is a consequence of \cite[Proposition~1.9]{Sc}. In general, the answer to Question~\ref{Q1} is No (see Theorem~\ref{Q1-DS-Coro}). So the following natural question arises:
 \begin{question} \label{Q2}
 What are the sequences $\mathbf w$ for which every $\mathcal D^K_{+}[s]$-positive linear functional $L_{\mathbf w}$ on $\mathcal D[s]$ is a $K$-moment functional?
 \end{question}
 To answer Questions~\ref{Q1} and \ref{Q2}, we must obtain an analogue of Riesz-Haviland theorem (see \cite[Theorem~1.12]{Sc}), which replaces the polynomials by Dirichlet polynomials. One approach to the proof of the Riesz-Haviland theorem, as presented in \cite{Sc}, can be based on the notion of an {\it adapted space} (see~\cite[Definition~1.5]{Sc}). To see whether or not $\mathcal D[s]$ is an adapted space, note that
$
 1 \in \mathcal D[s]~\text{and}~\mathcal D[s] = \mathcal D_+[s] - \mathcal D_+[s].$ 
In fact, since $\mathcal D[s]$ is an algebra containing $1,$ $$f(s) =\frac{1}{4}((f(s)+1)^2 - (f(s)-1)^2), \quad s \in [0, \infty)$$ is a difference of two Dirichlet series in $\mathcal D_{+}[s]$. However, $\mathcal D[s]$  does not have the crucial property of the existence of the dominating function (see \cite[Definition~1.5(iii)]{Sc}. Indeed, for $f(s)=1,$  there exists no $g \in \mathcal D_{+}[s]$ with the following property: For any $\epsilon > 0,$ there
exists a compact subset $K_\epsilon$ of $[0, \infty)$ such that $|f(s)| \Le \epsilon |g(s)|$ for all $s \in [0, \infty) \backslash K_\epsilon.$ It is interesting to note that there exist an $E_+$-positive linear functional $L$ on a linear space $E$ (without the aforementioned property), which is not a moment functional (see~\cite[Example~1.11]{Sc} and Corollary~\ref{rmk-Q1}).

\section{Hausdorff log-moment sequences}

One may answer Question \ref{Q1} by combining \cite[Proposition~1.9]{Sc} with a compactification technique (see \cite[Chapter~9]{Sc}). Since this proof is not relevant to the investigations here, we have relegated it to the appendix. Our ploy is to consider a $[0, 1]$-moment problem, which, after a change of variables (see Lemma~\ref{CoV}), yields a more general $[0, \infty)$-moment problem than the one discussed in Section~1.  The solution to the $[0, 1]$-moment problem (which is a simple consequence of \cite[Proposition~1.9]{Sc}) allows us to answer Question \ref{Q1} (see First proof of Theorem~\ref{Q1-DS-Coro}). 
Note that a particular case of the $[0, \infty)$-moment problem appears in \cite[Equation~(1.3)]{Mc-1} in the study of the multiplier algebra of certain Hilbert spaces of the Dirichlet series (see Example~\ref{Exm-DMS-1}(a)).
 This problem also appears in the study of bounded Helson matrices (see \cite{PP}). Indeed, it is easily seen from the discussion prior to \cite[Theorem~5.1]{PP} that if $\nu$ is a representing measure of $L_{\mathbf w}$ satisfying $\nu(\{0\})=0,$ then 
 \beqn
 L_{\mathbf w}(n^{-s}) = \int_{(0, 1)^{\infty}} t^{k(n)} \mathscr B_*\nu(dt), \quad n \Ge 1,
 \eeqn
 where $k(n)$ is the tuple of the exponents of primes appearing in the prime factorization of $n$ and $\mathscr B_*\nu$ is the push-forward of $\nu$ by the {\it Bohr lift} 
 \beq
 \label{B-lift}
 \mathscr B(s)=(p^{-s}_1, p^{-s}_2, \ldots ), \quad s \in (0, \infty) 
 \eeq
 with $\{p_1, p_2, \ldots \}$ denoting for the monotone enumeration of the prime numbers (see \cite[Section~5]{PP} for more details). The $[0, 1]^{\infty}$-moment problem above is a particular instance of the Hausdorff moment problem in an infinite number of variables (see \cite[Theorems~5.1 and 5.5]{AJK} and \cite[Theorem~3.8]{GKM} for variants of Riesz-Haviland theorem for this moment problem).
 These instances together with the previous discussion motivates the following definition (see \cite[Chapter~2]{BCR} for the notion of the restriction of a Borel measure to a Borel set):
%\cite[Corollary~IV.29.7]{Ba} 
% Also, .
%Note that $\mathcal D[s]$ is {\it adapted} in the following sense:
%\begin{enumerate}
%\item $1 \in \mathcal D[s],$
%\item $\mathcal D[s] = \mathcal D_+[s] - \mathcal D_+[s]$ (by (1), $f(s) =\frac{1}{4}((f(s)+1)^2 - (f(s)-1)^2),$ $s \in [0, \infty)$ is a difference of two Dirichlet series in $\mathcal D_{+}[s$]),
%\item  for each $f \in \mathcal D_{+}[s],$ there exists a $g \in \mathcal D_{+}[s]$ such that $|f(s)/g(s)| \rar 0$ as $s \rar \infty$ (let $g(s)=2^{-s}f(s)$
%\end{enumerate}
%	Consider the $\mathbb R$-linear functional $L: \mathcal D[s] \longrightarrow \mathbb{R}$ define by $L(n^{-s}) = w_{n},$ $n \geqslant 1.$ If $L$ is $\mathcal D_{+}[s]$-positive, that is, $L(q) \geqslant 0$ for all $q \in \mathcal D_{+}[s],$ 
%	\begin{itemize}
%		\item[(i)]$\{w_{n}\}_{n\geqslant 1}$ is a Dirichlet moment sequence,
%		\item[(ii)] the $\mathbb R$-linear functional $L: \mathcal L \longrightarrow \mathbb{R}$ satisfying $L(f_{m}) = w_{m},$ $m \geqslant 1,$ 
%	\end{itemize}

\begin{definition}	
Fix a positive integer $j.$	A sequence of non-negative real numbers $\mathbf w = \{w_{n}\}_{n \geqslant j}$ is called a {\it Hausdorff log-moment sequence} if there exists a positive Borel measure $\mu$ concentrated on $[0,1]$ such that the restriction $\mu|_{(0, 1]}$ of $\mu$ to $(0, 1]$ is a Radon measure and
	\begin{equation*}
		w_{n} = \int_{[0, 1]} t^{\log(n)} \mu(dt), \quad     n \geqslant j.
	\end{equation*}
	We refer to $\mu$ as a {\it representing measure} of $\mathbf w.$ 
	\end{definition}
\begin{remark} \label{rmk-cone} For a Hausdorff log-moment sequence $\mathbf w = \{w_{n}\}_{n \geqslant j}$ with a representing measure $\mu,$ we note the following:
\begin{enumerate}
\item if $j=1,$ then $\mu$ is a finite measure with total density equal to $w_1,$ 
\item if $\mu$ is a representing measure of $\mathbf w$ such that $\mu(\{0\})$ is finite, then since $\mu|_{(0, 1]}$ is locally finite, $\mu$ is a $\sigma$-finite measure,
\item for every integer $n \geqslant j,$ $w_n \leqslant w_j$  $($since $n \mapsto t^{\log(n)}$ is decreasing for every $t \in [0, 1]).$ Thus, the sequence $\mathbf w$ is decreasing and bounded.
%\item If $\mu|_{(0, 1]}$ is locally finite, then $\mu|_{(0, 1]}$ is always regular (see \cite[Proposition~14.2.6]{Roy}).  
\end{enumerate}
\end{remark}

In case $j \Ge 2,$ representing measures of Hausdorff log-moment sequences are not necessarily finite (cf. \cite[Section~1]{Mc-1}). 
\begin{example} \label{Exm-DMS-1}
Let us see some families of Hausdorff log-moment sequences. 
	\begin{itemize}
\item[(a)] For a real number $\alpha < 0,$ the sequence $\{(\log(n))^{\alpha}\}_{n \geqslant 2}$ is a Hausdorff log-moment sequence with representing measure $\mu_\alpha$ equal to 
\beqn
\mu_\alpha(dt) = \frac{1}{\Gamma(-\alpha)} \frac{(-\log(t))^{-1-\alpha}}{t}\,dt,
\eeqn
where $\Gamma$ denotes the Gamma function defined on the right half plane $\mathbb H_0.$
To see this, note that $\mu_{\alpha}|_{_{(0, 1]}}$ is a Radon measure.
Moreover, by Lemma~\ref{CoV}$($ii$)$ $($with $\varphi$ given by \eqref{phi-psi}$),$ 
%if $\psi : [0, \infty) \rar (0, 1]$ is given by $\psi(s) = e^{-s},$ $s \in [0, \infty),$ then by the change of variable $t = f(s)$ and $r=s \log n$ (in order), 
for any positive integer $n \geqslant 2,$
\beqn
 \int_{[0, 1]} t^{\log(n)} \mu_\alpha(dt) &=& 
 \int_{[0, \infty)} e^{-s \log(n)} \varphi_* \mu_{\alpha}(ds) \\ &=&
  \frac{1}{\Gamma(-\alpha)} \int_{[0, \infty)}  e^{-s\log(n)} s^{-1-\alpha} \,ds  \\
 &=&	\frac{(\log(n))^{\alpha}}{\Gamma(-\alpha)} \int_{[0, \infty)}  e^{-r} r^{-1-\alpha} \,dr \\ 
 &=& (\log(n))^{\alpha}.
\eeqn
%where we used the Euler's formula for Gamma function (see \cite[p.~290-291]{Si-1}).
	\item[(b)] For a real number $\alpha > 0,$ the sequence $\big\{\frac{1}{\log(n) + \alpha}\big\}_{n \geqslant 1}$ is a Hausdorff log-moment sequence with the representing measure given by $t^{\alpha-1}dt.$
Indeed, for any positive integer $n \geqslant 1,$
\beqn
\int_{[0, 1]} t^{\log(n)}t^{\alpha-1}dt = 
\left.\frac{t^{\log(n) + \alpha}}{\log(n)+\alpha} \,\right\vert_{0}^1  
= \frac{1}{\log(n) + \alpha}.
\eeqn
%		\item For a nonnegative real number $p$, the sequence $\{\frac{1}{n^{p}}\}_{n \geqslant 1}$ is a Dirichlet moment sequence with the representing measure equal to the point mass measure $\delta_{e^{-p}}$ at $\frac{1}{e^{p}}$. Indeed,
%\beqn
%\frac{1}{n^{p}} = \int_{0}^1 t^{\log n}d\delta_{\!\frac{1}{e^{p}}}(t), \quad n  \geqslant 1.		
%\eeqn
%In particular, the constant sequence with constant value $1$ is a DMS with the representing measure equal to the point mass $\delta_1$ at $1$.
%%		\item[(iii)] 	$\{\frac{1}{j}\}_{j \geq 1}$ is a Dirichlet moment sequence with representing measure point mass measure at $\frac{1}{e}$. Moreover, for $m,n \geq 0$ and $k \geq 1$, and by binomial expansion,
%%		
%%$$\sum_{i = 0}^n (-1)^{i} {n \choose i} w_{{k}^{m+i}} = \sum_{i = 0}^n (-1)^{i} {n \choose i} \frac{1}{{k}^{m+i}} = \frac{(1-k^{-1})^{n}}{k^{m}}  \geq 0$$
\item[(c)] For a real number $\alpha \in [0,1]$, the sequence $\{\alpha^{\log(n)}\}_{n \geqslant 1}$ is a Hausdorff log-moment sequence with the representing measure equal to the point mass measure $\delta_{\alpha}$ at $\alpha$.
 Indeed,
\beqn
\alpha^{\log(n)} = \int_{[0, 1]} t^{\log(n)}\delta_{\alpha}(dt), \quad n  \geqslant 1.		
\eeqn
In particular, for a non-negative real number $p$, the sequence $\{\frac{1}{n^{p}}\}_{n \geqslant 1}$ is a Hausdorff log-moment sequence with the representing measure equal to the atomic measure $\delta_{e^{-p}}$ with point mass at $e^{-p}$ $($the case in which $\alpha = e^{-p}).$
Moreover, $\{1,0,0, \ldots\}$ is a Hausdorff log-moment sequence with the representing measure equal to the atomic measure $\delta_{0}$ with point mass at $0$ $($the case in which $\alpha =0).$  \hfill $\diamondsuit$
%We tacitly used here the convention that $0^0=1.$  
\end{itemize}
\end{example}	
	
Our answer to Question~\ref{Q1} relies on the following characterization of the Hausdorff log-moment sequences given in terms of the associated linear functional on a certain subspace of the space of continuous functions on $[0, 1].$
\begin{proposition} \label{Q1-DS}
Let $\mathbf w:= \{w_{n}\}_{n\geqslant 1}$ be a sequence of non-negative real numbers. 
%For a positive integer $n,$ define $f_{n} : [0,1] \longrightarrow \mathbb{R}$ by 
Let $\mathcal E[t]$ denote the real linear span of functions $f_{n}(t) = t^{\log(n)},$ $t \in [0, 1],$ $n \Ge 1.$ 
%that is, 
%\beqn
%\mathcal E[t] := span_{\mathbb{R}} \{f_{n}   : n \geqslant 1\}.
%\eeqn
Consider the real linear functional $R_{\mathbf w}: \mathcal E[t] \longrightarrow \mathbb{R}$ defined by setting ${R}_{\mathbf w}(f_{n}) = w_{n},$ $n \geqslant 1$ and extended linearly to $\mathcal E[t].$
	%Let $\mathcal E_{+}[t]$ denote the set of all functions in $\mathcal E[t]$ which are non-negative on $[0,1]$. 
	For a closed subset $K$ of $[0, 1],$ 
	%let $\mathcal E^{K}_{+}[t]$ denote the set of all functions in $\mathcal E[t]$ which are non-negative on $K$. 
the following statements are equivalent:
	\begin{itemize}
		\item[(i)] $R_{\mathbf w}$ is $\mathcal E^K_{+}[t]$-positive, 
%		that is, ${R}_{\mathbf w}(q) \geqslant 0$ for all $q \in \mathcal E^K_{+}[t],$
		\item[(ii)] $R_{\mathbf w}$ is a $K$-moment functional, 
%		that is, there exists a finite positive Radon measure $\mu$ concentrated on $K$ such that
%		\begin{equation*}
%	R_{\mathbf w}(q) = \int_{K} q(t) \mu(dt), \quad   q \in \mathcal E[t],
%	\end{equation*}
		\item[(iii)] there exists a finite positive Radon measure $\mu$ concentrated on $K$ such that
		\beq \label{D-seq}
		w_{n} = \int_{K} t^{\log(n)} \mu(dt), \quad     n \geqslant 1,
	\eeq
		\item[(iv)] $\mathbf w$ is a Hausdorff log-moment sequence with a representing measure concentrated on $K.$
	\end{itemize}
	\end{proposition}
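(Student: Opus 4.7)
The plan is to establish the cyclic chain of implications (ii) $\Rightarrow$ (i) $\Rightarrow$ (iii) $\Rightarrow$ (iv) $\Rightarrow$ (ii). The implication (ii) $\Rightarrow$ (i) is immediate: integrating a function in $\mathcal{E}^K_+[t]$ against a positive Radon measure concentrated on $K$ produces a non-negative value. For (iii) $\Rightarrow$ (iv), I would extend the finite Radon measure on $K$ to a Borel measure on $[0,1]$ by setting its value on $[0,1]\setminus K$ to zero; its restriction to $(0,1]$ is then trivially Radon because the total mass is finite, so $\mathbf{w}$ satisfies the definition of a Hausdorff log-moment sequence with representing measure concentrated on $K$. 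For (iv) $\Rightarrow$ (ii), I would invoke Remark~\ref{rmk-cone}(1), which guarantees that any representing measure $\mu$ of a Hausdorff log-moment sequence with starting index $j=1$ is finite (with total mass $w_1$); by linearity, the integral representation then propagates from the generators $\{f_n\}$ to arbitrary $q \in \mathcal{E}[t]$, certifying that $R_{\mathbf w}$ is a $K$-moment functional with representing measure $\mu$ restricted to $K$.

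The heart of the proof, and the main obstacle, is the implication (i) $\Rightarrow$ (iii), where one must upgrade an abstract positivity hypothesis to a genuine representing measure. The key observation is that $K$ is a \emph{compact} subset of $[0,1]$, so each $f_n(t) = t^{\log(n)}$ restricts to a continuous function on $K$, and $\mathcal{E}[t]|_K$ is a linear subspace of $C(K,\mathbb{R})$ that contains the strictly positive function $f_1\equiv 1$. This is precisely the setting of the compact form of the Riesz--Haviland theorem, so I would invoke \cite[Proposition~1.9]{Sc}: the $\mathcal{E}^K_+[t]$-positivity of $R_{\mathbf w}$ allows, via Hahn--Banach, an extension to a positive linear functional on $C(K,\mathbb{R})$, which is then represented by a finite positive Radon measure $\mu$ on $K$ through the Riesz representation theorem. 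Evaluating this representation on the generators $f_n$ yields~\eqref{D-seq}.

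Conceptually, the reason this approach succeeds here, in contrast to the obstruction flagged in Section~1 for $\mathcal{D}[s]$ on the non-compact half-line $[0,\infty)$, is that the change of variable implicit in passing from $n^{-s}$ to $t^{\log(n)}$ has transferred the moment problem to the compact interval $[0,1]$. On the compact side the need for a dominating function disappears, and the Riesz--Haviland machinery goes through without $\mathcal{E}[t]$ being an adapted space in Choquet's sense. All remaining steps in the cycle should amount to careful bookkeeping between Radon and Borel measure conventions and routine use of linearity.
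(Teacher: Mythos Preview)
Your proposal is correct and essentially identical to the paper's proof: both hinge on invoking \cite[Proposition~1.9]{Sc} to pass from $\mathcal E^K_+[t]$-positivity to a finite representing measure on the compact set $K$, with the remaining implications handled by linearity and routine bookkeeping. The only difference is cosmetic---the paper runs the cycle as (i)$\Rightarrow$(ii)$\Rightarrow$(iii)$\Rightarrow$(iv)$\Rightarrow$(i) while you run (ii)$\Rightarrow$(i)$\Rightarrow$(iii)$\Rightarrow$(iv)$\Rightarrow$(ii), but the substantive content is the same.
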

	\begin{proof}
		Note that the constant function $1$ belongs to $\mathcal E[t].$ Hence, if the linear functional $R_{\mathbf w}$ is $\mathcal E^K_{+}[t]$-positive, then by \cite[Proposition~1.9]{Sc}, there exists a positive Radon measure $\mu$ concentrated on $K$ such that
\beq \label{R-w}
R_{\mathbf w}(q) = \int_{K} q(t) \mu(dt), \quad q \in \mathcal E[t].
\eeq
Thus we obtain the implication (i)$\Rightarrow$(ii).
 Letting $q=f_n$ in \eqref{R-w}, we get
		\eqref{D-seq} and letting $n=1$ in \eqref{D-seq}, we get $\mu(K)=w_1.$	 This yields the implication (ii)$\Rightarrow$(iii). The implication (iii)$\Rightarrow$(iv) is trivial. 
		To see the implication (iv)$\Rightarrow$(i), note that if $\mathbf w$ is a Hausdorff log-moment sequence, then by the linearity of the integral,
		$$R_{\mathbf w}(f) = \int_{K} f(x) \mu(dx), \quad f \in \mathcal E[t].$$
		It now follows that $R_{\mathbf w}$ is $\mathcal E^K_{+}[t]$-positive.
%Hence $\{w_{n}\}_{n \geqslant 1}$ is a Dirichlet moment sequence.
	\end{proof}
	
	Proposition~\ref{Q1-DS} can be used to prove an analogue of Riesz-Haviland theorem for the space of Dirichlet polynomials. We present two proofs of this analogue. The first one relevant to the study of  Hausdorff log-moment sequences is presented in Section~3. The second one based on a known compactification technique is given in Appendix. 

	\section{An analogue of the Riesz-Haviland theorem for $\mathcal D[s]$}
	
For a subset $\sigma$ of the set $\mathbb Z_+$ of positive integers, let $\chi_{\sigma}$ denote the indicator function of $\sigma.$ We sometimes use  $\chi_\sigma$ to denote the sequence $\{\chi_\sigma(n)\}_{n \Ge 1}.$ 	

The following result characterizes $\mathcal D^{K}_{+}[s]$-positive linear functionals $L_{\mathbf w}$ answering Question~\ref{Q1} (cf. \cite[Theorem~5.1]{AJK}).
\begin{theorem} \label{Q1-DS-Coro}
 	Let $\mathbf w = \{w_{n}\}_{n\geqslant 1}$ be a sequence of non-negative real numbers. 
% 	Let $E$ be a closed subset of $[0,\infty).$ Let $\mathcal D[s]$ denote the real linear space of finite Dirichlet series with real coefficients. 
% 	Let $\mathcal D^{E}_{+}[s]$ denote the set of all functions in $\mathcal D[s]$ which are non-negative on $E.$ 
 	Consider the real linear functional $L_{\mathbf w}: \mathcal D[s] \longrightarrow \mathbb{R}$ defined by $L_{\mathbf w}(n^{-s}) = w_{n},$ $n \geqslant 1,$ and extended linearly to $\mathcal D[s].$
For a closed subset $K$ of $[0,\infty),$ the following statements are equivalent:
 	\begin{itemize}
 		\item[(i)] $L_{\mathbf w}$ is $\mathcal D^{K}_{+}[s]$-positive,
 		\item[(ii)] there exists a finite positive Radon measure $\nu$ concentrated on $K$ with $\nu(K) \Le w_1$ such that for every $p \in \mathcal D[s],$
 	\beq 	\label{common-exp-0}
 			L_{\mathbf w}(p) 
 			= \begin{cases} 
 			\displaystyle	 (w_1 - \nu(K))  \lim_{s \rar \infty}p(s) + \int_{K} p(s) \nu(ds) & \mbox{if~} \text{$K$ is unbounded,} \\
 	\displaystyle			\int_{K} p(s) \nu(ds) & \mbox{if~} \text{$K$ is bounded,}
 			\end{cases}
 	\eeq
 		\item[(iii)] there exists a finite positive Radon measure $\nu$ concentrated on $K$ with $\nu(K) \Le w_1$ such that for every positive integer $n \geqslant 1,$
 		\beq
 		\label{common-exp}
 		w_n = \begin{cases}
 	\displaystyle		(w_1-\nu(K)) \chi_{_{\{1\}}}\!(n) + \int_{K} n^{-s}\nu(ds) & \mbox{if~} \text{$K$ is unbounded,} \\
 	\displaystyle	\int_{K} n^{-s}\nu(ds) & \mbox{if~} \text{$K$ is bounded.}
 		\end{cases}
 		\eeq
 	\end{itemize}
 \end{theorem}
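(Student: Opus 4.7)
The plan is to reduce Theorem~\ref{Q1-DS-Coro} to Proposition~\ref{Q1-DS} via the change of variables $t = e^{-s}$, which sets up a bijection $q \leftrightarrow p$ between $\mathcal E[t]$ and $\mathcal D[s]$ given by $p(s) = q(e^{-s})$; explicitly, $q(t) = \sum_{n=1}^k a_n t^{\log n}$ corresponds to $p(s) = \sum_{n=1}^k a_n n^{-s}$. Under this bijection, $R_{\mathbf w}(q) = L_{\mathbf w}(p)$ since both equal $\sum a_n w_n$.

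I would first dispose of the easy implications. For (iii)$\Rightarrow$(ii), linearity gives $L_{\mathbf w}(p) = \sum a_n w_n$; substituting \eqref{common-exp} and using $a_1 = \lim_{s \rar \infty} p(s)$ (because $n^{-s} \rar 0$ for $n \Ge 2$) recovers \eqref{common-exp-0}. The implication (ii)$\Rightarrow$(i) is immediate: for $p \in \mathcal D^K_+[s]$ both the integrand and the limit at infinity on the right side of \eqref{common-exp-0} are non-negative.

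The main content is (i)$\Rightarrow$(iii). Let $\tilde K \subseteq [0,1]$ be the closure in $[0,1]$ of $\{e^{-s} : s \in K\}$; thus $\tilde K = \{e^{-s} : s \in K\}$ when $K$ is bounded and $\tilde K = \{e^{-s} : s \in K\} \cup \{0\}$ when $K$ is unbounded. Since $q$ is continuous at $0$ with $q(0) = a_1 = \lim_{s \rar \infty} p(s)$, the correspondence shows that $p \in \mathcal D^K_+[s]$ if and only if $q \in \mathcal E^{\tilde K}_+[t]$. Hence (i) asserts that $R_{\mathbf w}$ is $\mathcal E^{\tilde K}_+[t]$-positive, and Proposition~\ref{Q1-DS} supplies a finite positive Radon measure $\mu$ concentrated on $\tilde K$ with $\mu(\tilde K) = w_1$ representing $R_{\mathbf w}$. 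Decomposing $\mu = \mu(\{0\})\delta_0 + \mu|_{(0,1]}$ and pushing $\mu|_{(0,1]}$ forward under the homeomorphism $t \mapsto -\log t$ from $(0,1]$ onto $[0,\infty)$ produces a finite positive Radon measure $\nu$ on $K$ with $\nu(K) = \mu((0,1])$. Applying this push-forward to $R_{\mathbf w}(t^{\log n}) = \int_{\tilde K} t^{\log n}\, \mu(dt)$ and separating the contribution at $t = 0$ (which is $1$ for $n = 1$ and $0$ for $n \Ge 2$) gives
\[
w_n = \mu(\{0\})\,\chi_{_{\{1\}}}\!(n) + \int_K n^{-s}\,\nu(ds), \qquad n \Ge 1.
\]
In the bounded case $0 \notin \tilde K$, so $\mu(\{0\}) = 0$ and $\nu(K) = w_1$; in the unbounded case $\mu(\{0\}) = w_1 - \nu(K)$. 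Either way, this matches \eqref{common-exp}.

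The main obstacle I anticipate is the careful bookkeeping of the atom $\mu(\{0\})$ in the unbounded case: this atom, produced by the concentration of $\mu$-mass near $0 \in \tilde K$ (corresponding to $s \rar \infty$ in $K$), is precisely what generates the anomalous $(w_1 - \nu(K))\chi_{_{\{1\}}}$ term and distinguishes the unbounded case from the bounded one. A related point is ensuring that $\mu|_{(0,1]}$ pushes forward to a Radon measure on $K$, but this is automatic from $-\log$ being a homeomorphism of $(0,1]$ onto $[0,\infty)$.
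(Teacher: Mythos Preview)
Your proof is correct and follows essentially the same route as the paper's first proof: reduce to Proposition~\ref{Q1-DS} via the change of variables $t=e^{-s}$ (the paper's Lemmas~\ref{CoV} and~\ref{relationship}), obtain a representing measure $\mu$ on $\widetilde K\subseteq[0,1]$, then push $\mu|_{(0,1]}$ forward under $-\log$ to get $\nu$ on $K$ while the atom $\mu(\{0\})$ becomes the coefficient $w_1-\nu(K)$. The only cosmetic difference is the order of implications: the paper runs $(\mathrm{i})\Rightarrow(\mathrm{ii})\Rightarrow(\mathrm{iii})\Rightarrow(\mathrm{i})$, whereas you do $(\mathrm{iii})\Rightarrow(\mathrm{ii})$, $(\mathrm{ii})\Rightarrow(\mathrm{i})$, and $(\mathrm{i})\Rightarrow(\mathrm{iii})$.
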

 
 %We conclude this section by collecting some measure-theoretic prerequisites essential in our answers to Questions~\ref{Q1} and \ref{Q2}. 
We do not see any obvious way to derive Theorem~\ref{Q1-DS-Coro} from \cite[Theorem~5.1]{AJK}. Also, Theorem~\ref{Q1-DS-Coro} falls short to deduce (via the change of variable using Bohr lift given by \eqref{B-lift}) the Riesz-Haviland theorem for the moment problem in an infinite number of variables. 

 In the proof of Theorem~\ref{Q1-DS-Coro}, we need a couple of lemmas. We recall first the notion of an image measure.
	
Let $(X, \Sigma, \mu)$ be a measure space and $Y$ be a Hausdorff space. Let $\mathcal B(Y)$ denote the Borel $\sigma$-algebra of $Y.$ If $\phi : X \rar Y$ is a 
$\Sigma$-measurable function, then $\phi_*\mu$ denotes the {\it push-forward of $\mu$ by $\phi$} or the {\it image of $\mu$ under $\phi$} defined as
$\phi_*\mu(\sigma) = \mu(\phi^{-1}(\sigma)),$ $\sigma \in \mathcal B(Y).$
The reader is referred to \cite[Chapter~2.1]{BCR} for elementary facts pertaining to the image measures. We find it convenient to state the following consequence of \cite[Exercise 1.4.38]{T}.
\begin{lemma}[Change of variables] 
\label{CoV}
Let $\mu$ be a positive Borel measure on $[0, 1]$ and let $\nu$ be a positive Borel measure on $[0, \infty).$ 
Define functions  $\varphi :  (0,1] \longrightarrow [0, \infty)$ and $\psi :  [0, \infty) \longrightarrow (0, 1]$ by 
\beq \label{phi-psi}
\varphi(t) = - \log(t), ~ t \in (0, 1], \quad 
\psi(s)  = e^{-s}, ~ s \in [0, \infty).
\eeq
Then, for Borel measurable subsets $\widetilde{K}$ and $K$ of $[0, 1]$ and $[0, \infty)$ respectively, the following statements are valid.
\begin{enumerate}
\item[(i)] if $0 \notin \widetilde{K},$ then for every $\lambda \in [0, \infty),$
\beqn
\int_{\widetilde{K}} t^{\lambda} \mu(dt) = \int_{\varphi(\widetilde{K})} e^{-s \lambda} \varphi_* \mu(ds), 
\eeqn  
\item[(ii)] for every $\lambda \in [0, \infty),$
\beqn
 \displaystyle \int_{K} e^{-s \lambda} \nu(ds) = \int_{\psi(K)} t^{\lambda} \psi_*\nu(dt).
\eeqn
\end{enumerate}
Both sides in the above identities could be possibly infinite.
\end{lemma}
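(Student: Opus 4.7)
The plan is to derive both identities as instances of the abstract change of variables formula for image measures, which is essentially the content of the cited exercise from Tao: for a Borel measurable map $\phi: X \to Y$ and a nonnegative Borel function $g: Y \to [0, \infty]$,
\[
\int_Y g \, d(\phi_* \mu) = \int_X (g \circ \phi) \, d\mu.
\]
The maps $\varphi$ and $\psi$ are mutually inverse homeomorphisms between $(0,1]$ and $[0, \infty)$, hence Borel isomorphisms, so $\varphi_* \mu$ and $\psi_* \nu$ are well-defined Borel measures and $\varphi(\widetilde{K}), \psi(K)$ are Borel sets.

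For assertion (i), the hypothesis $0 \notin \widetilde{K}$ places $\widetilde{K}$ entirely inside the domain $(0,1]$ of $\varphi$. I would apply the displayed formula with $g(s) = e^{-s\lambda}\chi_{\varphi(\widetilde{K})}(s)$; because $\varphi$ restricted to $(0,1]$ is injective, $g(\varphi(t)) = e^{\lambda \log t} \chi_{\widetilde{K}}(t) = t^{\lambda} \chi_{\widetilde{K}}(t)$, so the formula immediately produces the desired identity. Assertion (ii) is the symmetric computation using $\psi$ in place of $\varphi$ and $g(t) = t^{\lambda}\chi_{\psi(K)}(t)$, for which $g(\psi(s)) = e^{-s\lambda}\chi_K(s)$.

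There is essentially no real obstacle beyond careful bookkeeping about domains. The only subtle point worth highlighting is why the hypothesis $0 \notin \widetilde{K}$ is needed in (i) but not in (ii): $\varphi$ is undefined at $t = 0$, so any mass that $\mu$ places at $0$ is invisible to $\varphi_* \mu$ and would create a discrepancy between the two sides if $0 \in \widetilde{K}$; by contrast $\psi$ is defined on all of $[0, \infty)$, so no such exclusion is required.
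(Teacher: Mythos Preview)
Your proposal is correct and matches the paper's approach: the paper gives no explicit proof of this lemma at all, simply stating it as a consequence of \cite[Exercise~1.4.38]{T}, which is precisely the abstract change-of-variables formula $\int_Y g\,d(\phi_*\mu)=\int_X (g\circ\phi)\,d\mu$ you invoke. Your write-up spells out the choice of $g$ in each case and correctly identifies the role of the hypothesis $0\notin\widetilde{K}$; the only cosmetic point is that $\mu$ lives on $[0,1]$ while $\varphi$ is defined only on $(0,1]$, so $\varphi_*\mu$ should be read as $\varphi_*(\mu|_{(0,1]})$, which is harmless since $\widetilde{K}\subseteq(0,1]$.
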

%\begin{proof} 
%In view of the change of variables formula for non-negative measurable functions (see~\cite[Exercise 1.4.38]{T}), 
% the substitutions $t = e^{-s}$ and $s = - \log(t)$ in (i) and (ii) respectively yield the required identities.
%\end{proof}

The following lemma provides precise relationship between $\mathcal D[s]$ (resp. $\mathcal D^K_+[s]$) and $\mathcal E[t]$ (resp. $\mathcal E^{\widetilde{K}}_+[t]$).
\begin{lemma} \label{relationship}
 	Let $\mathcal E[t]$ and $\mathcal E^K_+[t]$ be as defined in the statement of Proposition~\ref{Q1-DS}.
% 	Let $\mathcal D[s]$ denote the real linear space of Dirichlet polynomials with real coefficients and let $\mathcal D^{K}_{+}[s]$ denote the set of all functions in $\mathcal D[s]$ which are non-negative on $K \subseteq [0,\infty).$ 
 	If $\varphi :  (0,1] \longrightarrow [0, \infty)$ and $\psi :  [0, \infty) \longrightarrow (0, 1]$ are given by \eqref{phi-psi}, then the following statements are valid:
 	\begin{enumerate}
 		\item[(i)] if $q \in \mathcal E[t],$ then $q \circ \psi \in \mathcal D[s],$
 		\item[(ii)] if $p \in \mathcal D[s],$ then there exists $q_p \in \mathcal E[t]$ such that $p = q_p \circ \psi,$
 		where $q_p$ is given by
 		\beq \label{q}
 		q_p(t) = \begin{cases} \displaystyle \lim_{s \rar \infty}p(s) & \mbox{if~} t=0, \\
 			p \circ \varphi(t), & \mbox{if}~t \in (0, 1], 
 		\end{cases}
 		\eeq
 		%if $\lim_{s \rar \infty}q(s)$ exists as a real number and $q \circ \psi \in \mathcal D[s],$ then $q \in \mathcal E,$
 		\item[(iii)] if $\widetilde{K}$ is a closed subset of $[0, 1]$ and $q \in \mathcal E[t],$ then $q \in \mathcal E^{\widetilde{K}}_+[t]$ if and only if $q \circ \psi \in \mathcal D^{{K}}_+[s],$ where ${K} = \psi^{-1}(\widetilde{K} \backslash \{0\}).$
 	\end{enumerate}
 \end{lemma}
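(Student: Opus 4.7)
The plan is to exploit the fact that $\varphi$ and $\psi$ from \eqref{phi-psi} are mutually inverse homeomorphisms between $(0,1]$ and $[0,\infty)$, coupled with the key identity
\[ t^{\log n} \,=\, e^{-\varphi(t)\log n} \,=\, n^{-\varphi(t)}, \quad t \in (0,1], \ n \Ge 1, \]
which is exactly what converts the generators $f_n(t) = t^{\log n}$ of $\mathcal E[t]$ into the generators $n^{-s}$ of $\mathcal D[s]$. Once this correspondence is in place, all three parts will reduce to a careful bookkeeping of coefficients together with a continuity argument at $t=0$.

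For part (i), I would write $q = \sum_{n=1}^{k} a_n f_n$ and simply compute $(q\circ\psi)(s) = \sum_{n=1}^k a_n (e^{-s})^{\log n} = \sum_{n=1}^k a_n n^{-s}$, which lies in $\mathcal D[s]$. For part (ii), given $p(s) = \sum_{n=1}^k a_n n^{-s}$, I would set $q_p := \sum_{n=1}^k a_n f_n \in \mathcal E[t]$ and verify the piecewise formula \eqref{q} by cases: for $t \in (0,1]$, the displayed identity yields $q_p(t) = p(\varphi(t))$, while at $t = 0$ the conventions $0^0 = 1$ and $0^{\log n} = 0$ for $n \Ge 2$ give $q_p(0) = a_1$, which equals $\lim_{s \to \infty} p(s)$ since $1^{-s} = 1$ and $n^{-s} \to 0$ for $n \Ge 2$. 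The equality $p = q_p \circ \psi$ then follows from the $(0,1]$ case alone, since $\psi$ takes values in $(0,1]$ and $\varphi\circ\psi = \mathrm{id}_{[0,\infty)}$.

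For part (iii), the relation $K = \psi^{-1}(\widetilde K \setminus \{0\})$ combined with the bijectivity of $\psi:[0,\infty)\to(0,1]$ gives $\psi(K) = \widetilde K \setminus \{0\}$. The forward implication is then immediate: if $q \Ge 0$ on $\widetilde K$ and $s \in K$, then $\psi(s) \in \widetilde K$, hence $(q\circ\psi)(s) \Ge 0$. For the reverse implication, any $t \in \widetilde K \setminus \{0\}$ satisfies $t = \psi(\varphi(t))$ with $\varphi(t) \in K$, so $q(t) = (q\circ\psi)(\varphi(t)) \Ge 0$. The delicate point, which I expect to be the main obstacle, is the case $t = 0 \in \widetilde K$; here I would invoke the formula $q(0) = \lim_{s \to \infty}(q\circ\psi)(s)$ coming from part (ii) and observe that this limit is realisable along a sequence contained in $K$ precisely when $\widetilde K \setminus \{0\}$ accumulates at $0$ (equivalently, when $K$ is unbounded in $[0,\infty)$), which is the configuration relevant to the subsequent application in Theorem~\ref{Q1-DS-Coro}. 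Everything else is direct computation from the explicit formulas for $\varphi$ and $\psi$.
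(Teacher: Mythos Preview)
Your approach matches the paper's: compute $q\circ\psi$ on the generators $f_n$ for (i), use $\varphi\circ\psi=\mathrm{id}_{[0,\infty)}$ for (ii), and handle (iii) via (i) together with continuity of $q$ at $0$. You are in fact more careful than the paper on part (iii): the paper's proof reads simply ``the equivalence in (iii) follows from (i) and the continuity of $q$,'' whereas you correctly isolate the case $t=0\in\widetilde K$ and note that the continuity argument needs $\widetilde K\setminus\{0\}$ to accumulate at $0$ (equivalently $K$ unbounded). Without that hypothesis the reverse implication can fail---for instance $\widetilde K=\{0,1\}$, $K=\{0\}$, $q(t)=t^{\log 2}-1$---so your caveat is warranted; the only place (iii) is used is the first proof of Theorem~\ref{Q1-DS-Coro}, where $K$ is indeed assumed unbounded.
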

 \begin{proof}
 	For every integer $n \Ge 1,$ if $q(t)=t^{\log(n)},$ $t \in [0, 1],$ then $q \circ \psi(s) = n^{-s},$ $s \in [0, \infty).$  The statement (i) is now clear. To see (ii), let $p \in \mathcal D[s]$ and note that $\lim_{s \rar \infty}p(s)$ exists as a real number. Thus $q_p$ given by \eqref{q} is well-defined. 
 	Since $\varphi \circ \psi$ is the identity function on $[0, \infty),$ 
  $p = q_p \circ \psi.$ The equivalence in (iii) follows from (i) and the continuity of $q.$
 \end{proof}

 \begin{proof}[First proof of Theorem~\ref{Q1-DS-Coro}] 
 In view of \cite[Proposition~1.9]{Sc}, we may assume that $K$ is unbounded. 
 Let $\varphi :  (0,1] \longrightarrow [0, \infty)$ and $\psi :  [0, \infty) \longrightarrow (0, 1]$ be given by \eqref{phi-psi}.
 	To see the implication (i)$\Rightarrow$(ii), suppose that $L_{\mathbf w}$ is a $\mathcal D^{K}_{+}[s]$-positive linear functional satisfying 
 	\beq 
 	\label{value-L}
 	L_{\mathbf w}(n^{-s}) = w_{n}, \quad n \geqslant 1.
 	\eeq
 	Let $\mathcal E[t]$ and $\mathcal E^{K}_+[t]$ be as defined in the statement of Proposition~\ref{Q1-DS}.
 	Define $R_{\mathbf w}: \mathcal E[t] \longrightarrow \mathbb{R}$ by setting $R_{\mathbf w}(t^{\log(n)}) = w_n,$ and extend it linearly to $\mathcal E[t].$ Let $\widetilde{K} = \overline{\phi^{-1}(K)}$ denote a closed subset of $[0, 1]$ and note that $\widetilde{K} = \phi^{-1}(K) \cup \{0\}$ (see \eqref{phi-psi}).
 	We now check that $R_{\mathbf w}$ is $\mathcal E^{\widetilde{K}}_+[t]$-positive.
 	By Lemma~\ref{relationship}(iii), if $q \in \mathcal E^{\widetilde{K}}_+[t],$ then $q \circ \psi \in \mathcal D^{K}_+[s]$ and 
 	\beqn
 	R_{\mathbf w}(q) \overset{\eqref{value-L}}= L_{\mathbf w}(q \circ \psi) \geqslant 0.
 	\eeqn
 	It now follows from Proposition~\ref{Q1-DS} that
 	there exists a finite positive Radon measure $\mu$ concentrated on $\widetilde{K}$ such that
 	\begin{equation*}
 		L_{\mathbf w}(q \circ \psi)= \int_{\widetilde{K}} q(t) \mu(dt), \quad   q \in \mathcal E[t].
 	\end{equation*}
% 	Without loss of generality, we may assume that $0 \in \widetilde{K}$ (otherwise $\mu$ can be extended trivially to $\widetilde{K} \cup \{0\}$).
 	By Lemma~\ref{relationship}(ii), for every $p \in \mathcal D[s],$ there is $q_p \in \mathcal E[t]$ such that 
 	\beqn
 	L_{\mathbf w}(p)
 	&=&  L_{\mathbf w}(q_p \circ \psi) \\
 	&\overset{\eqref{q}}=&  \displaystyle  \mu(\{0\})  \lim_{s \rar \infty}p(s)  + \int_{\phi^{-1}(K)} q_p(t) \mu(dt).
 	\eeqn
 %	Let $\mu_0$ be as defined in \eqref{mu-0-def} and note that $\mu_0(\{0\})=0.$ 
 By Lemma~\ref{CoV}(i) and \eqref{q}, for every $p \in \mathcal D[s],$
 	\beq \label{proof-L-mom}
 	L_{\mathbf w}(p) =  \displaystyle \mu(\{0\}) \lim_{s \rar \infty}p(s)   + \int_{K} p(s) \varphi_*\mu(ds).
 	\eeq
 	Since $\mu$ is a finite Radon measure, so is $\varphi_*\mu$ (see \cite[Proposition~2.1.15]{BCR}). 
 	This completes the verification of (i)$\Rightarrow$(ii) with $\nu =\varphi_*\mu$
 	provided we check that $\nu$ is a finite measure satisfying $\nu(K) \Le w_1.$  
 	%If $K$ is bounded, then by letting $p=1$  in \eqref{proof-L-mom}, it is clear $\nu$ is a finite measure such that $\nu(K)= w_1.$ 
To see this, let $p(s)=n^{-s},$ $s \in [0, \infty)$ in \eqref{proof-L-mom} to obtain
 	\beq 
 \label{2-1-imp}	w_n = L_{\mathbf w}(p)=  \mu(\{0\})\,\chi_{_{\{1\}}}\!(n)  + \int_{K} n^{-s} \varphi_*\mu(ds).
 	\eeq
 	This yields  
 	\beq \label{nu-mu-rel-1}
 	\mu(\{0\}) = w_1-\varphi_*\mu(K)
 	\eeq 
 	completing the verification of (i)$\Rightarrow$(ii). 
 	The implication (ii)$\Rightarrow$(iii) is now immediate from \eqref{2-1-imp}.
 	
 	To see the implication (iii)$\Rightarrow$(i), 
 	note that for any $p \in \mathcal D[s],$
 	\beqn
 	L_{\mathbf w}(p) 
   	= 
 \displaystyle		(w_1 - \nu(K)) \lim_{s \rar \infty}p(s)    + \int_{K} p(s) \nu(ds).
 	\eeqn
 	Since $\nu(K) \Le w_1$  and $\lim_{s \rar \infty} p(s) \Ge 0,$ $p \in \mathcal D_+[s],$ $L_{\mathbf w}$ is $\mathcal D_{+}[s]$-positive.
 \end{proof}

 Here is an application of Theorem~\ref{Q1-DS-Coro} to Hausdorff log-moment sequences. 
 \begin{corollary} \label{Coro-HLM}
 	Let $\mathbf w = \{w_{n}\}_{n\geqslant 1}$ be a sequence of non-negative real numbers. 
 	Consider the real linear functional $L_{\mathbf w}: \mathcal D[s] \longrightarrow \mathbb{R}$ defined by $L_{\mathbf w}(n^{-s}) = w_{n},$ $n \geqslant 1,$ and extended linearly to $\mathcal D[s].$
Then $L_{\mathbf w}$ is $\mathcal D^{[0, \infty)}_{+}[s]$-positive if and only if $\mathbf w$ is a Hausdorff log-moment sequence.
\end{corollary}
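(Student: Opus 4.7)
The plan is to reduce the claim directly to Theorem~\ref{Q1-DS-Coro} with $K = [0, \infty)$ via the change of variables $\psi(s) = e^{-s}$ from Lemma~\ref{CoV}, matching the atom at the ``point at infinity'' on the $s$-side with an atom at $t = 0$ on the $t$-side.

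For the forward direction, suppose $L_{\mathbf w}$ is $\mathcal D^{[0, \infty)}_{+}[s]$-positive. Then Theorem~\ref{Q1-DS-Coro} furnishes a finite positive Radon measure $\nu$ on $[0, \infty)$ with $\nu([0, \infty)) \Le w_1$ and
$$w_n = (w_1 - \nu([0, \infty)))\, \chi_{_{\{1\}}}\!(n) + \int_{[0, \infty)} n^{-s}\, \nu(ds), \quad n \Ge 1.$$
Applying Lemma~\ref{CoV}(ii) with $\lambda = \log(n)$ converts the integral into $\int_{(0, 1]} t^{\log(n)}\, \psi_*\nu(dt)$. I then define the Borel measure
$$\mu := (w_1 - \nu([0, \infty)))\, \delta_0 + \psi_*\nu$$
on $[0, 1]$, where $\psi_*\nu$ is viewed as a measure on $[0, 1]$ concentrated on $(0, 1]$. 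Its restriction $\mu|_{(0, 1]} = \psi_*\nu$ is a finite Radon measure, being the pushforward of a finite Radon measure by the continuous map $\psi$ (cf.~\cite[Proposition~2.1.15]{BCR}). Using the conventions $0^0 = 1$ and $0^{\log(n)} = 0$ for $n \Ge 2,$ one checks that $\int_{[0, 1]} t^{\log(n)}\, \mu(dt) = w_n$, so $\mathbf w$ is a Hausdorff log-moment sequence with representing measure $\mu$.

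For the converse, let $\mu$ be a representing measure of $\mathbf w$; by Remark~\ref{rmk-cone}(1), $\mu$ is a finite measure on $[0, 1]$. For $p \in \mathcal D^{[0, \infty)}_{+}[s]$ with $p(s) = \sum_{n=1}^{k} a_n n^{-s}$, the linearity of the integral yields
$$L_{\mathbf w}(p) = \int_{[0, 1]} \Big(\sum_{n=1}^{k} a_n t^{\log(n)}\Big) \mu(dt).$$
For $t \in (0, 1]$, the integrand equals $p(-\log(t)) \Ge 0$; at $t = 0$ it reduces to $a_1 = \lim_{s \rar \infty} p(s) \Ge 0$ by the non-negativity of $p$ on $[0, \infty)$. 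Hence $L_{\mathbf w}(p) \Ge 0$, so $L_{\mathbf w}$ is $\mathcal D^{[0, \infty)}_{+}[s]$-positive.

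The only delicate point I anticipate is the bookkeeping at $t = 0$: the Hausdorff log-moment framework admits an atom at $0$ which, under $\psi$, corresponds to the ``point at infinity'' in $[0, \infty)$, and this atom accounts precisely for the coefficient $(w_1 - \nu([0, \infty)))\, \chi_{_{\{1\}}}\!(n)$ in Theorem~\ref{Q1-DS-Coro}. Once this identification is made, the remainder is a routine application of Lemma~\ref{CoV} and linearity.
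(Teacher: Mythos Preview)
Your proof is correct and follows essentially the same route as the paper: for the forward direction you apply Theorem~\ref{Q1-DS-Coro} with $K=[0,\infty)$, push $\nu$ forward by $\psi$, and add the atom $(w_1-\nu([0,\infty)))\delta_0$ to obtain the representing measure $\mu$, exactly as the paper does. The only minor difference is in the converse: the paper invokes the (iii)$\Rightarrow$(i) direction of Theorem~\ref{Q1-DS-Coro} together with Example~\ref{Exm-DMS-1}(c) and Lemma~\ref{CoV}(ii), whereas you verify $\mathcal D^{[0,\infty)}_+[s]$-positivity directly by writing $L_{\mathbf w}(p)$ as an integral and checking the integrand is non-negative (this is the argument of Proposition~\ref{Q1-DS}, (iv)$\Rightarrow$(i), transported to $\mathcal D[s]$). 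Both arguments are equally short and entirely equivalent in substance.
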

\begin{proof}
Applying Theorem~\ref{Q1-DS-Coro} to the closed set $K=[0, \infty)$ yields that $L_{\mathbf w}$ is $\mathcal D^{[0, \infty)}_{+}[s]$-positive if and only if there exists a finite positive Radon measure $\nu$ concentrated on $[0, \infty)$ with $\nu(K) \Le w_1$ such that 
 		\beqn
 		w_n = 
 (w_1-\nu(K)) \chi_{_{\{1\}}}\!(n) + \int_{K} n^{-s}\nu(ds), \quad n \Ge 1. 
 		\eeqn
The desired equivalence is now immediate from Example~\ref{Exm-DMS-1}(c) and Lemma \ref{CoV}(ii). In this case, one can choose the representing measure of $\mathbf w$ to be $(w_1-\nu(K))\delta_0 + \psi_*\nu.$ 
\end{proof}

%The techinique employed in the Second proof of Theorem~\ref{Q1-DS-Coro} can be used to extend \cite[Proposition~1.9]{Sc} to any non-compact locally compact Hausdorff space. 
%On the other hand, 
As noted in \cite[Exercise~1.3.3]{Sc}, the evaluation functional at the point in the Stone–$\check{\mbox{C}}$ech compactification of $\mathbb R$ and not belonging to $\mathbb R$ is a $C(\mathbb R, \mathbb R)_+$-positive functional which is not a $\mathbb R$-moment functional. This fact together with the second proof of Theorem~\ref{Q1-DS-Coro} suggests that the perturbation to the integral appearing in \eqref{common-exp-0} could be an obstruction in making $L_{\mathbf w}$ a $K$-moment functional. This is confirmed by the next result, which also answers Question~\ref{Q2}. 
%we need the uniqueness of Dirichlet moment sequences in the following sense.

\begin{corollary} \label{rmk-Q1}
%Let $\mathbf w = \{w_{n}\}_{n\geqslant 1}$ be a sequence of non-negative real numbers. Let $\mathcal D[s]$ denote the real linear space of finite Dirichlet series with real coefficients. 
%	Let $\mathcal D_{+}[s]$ denote the set of all functions in $\mathcal D[s]$ which are non-negative on $[0, \infty)$. 
	Consider the real linear functional $L_{\mathbf w}: \mathcal D[s] \longrightarrow \mathbb{R}$ defined by $L_{\mathbf w}(n^{-s}) = w_{n},$ $n \geqslant 1,$ and extended linearly to $\mathcal D[s].$
For a closed unbounded subset $K$ of $[0, \infty),$	assume that $L_{\mathbf w}$ is $\mathcal D^K_{+}[s]$-positive, so 
that $L_{\mathbf w}$ is given by \eqref{common-exp-0} for some finite positive Radon measure $\nu$ concentrated on $K.$ 
Then the following statements are equivalent:
\begin{enumerate}
\item[(i)] $L_{\mathbf w}$ is a $K$-moment functional, 
%that is, there exists a finite positive Radon measure $\gamma$ concentrated on $[0, \infty)$ such that
%		\begin{equation} \label{m-f-L}
%		L_{\mathbf w}(q) = \int_{[0, \infty)} q(s) \gamma(ds), \quad   q \in \mathcal D[s],
%	\end{equation}
	\item[(ii)]  $w_1 = \nu(K),$
	\item[(iii)] $\mathbf w$ is a Hausdorff log-moment sequence with a representing measure $\mu$ concentrated on $\widetilde{K}$ such that $0 \in \widetilde{K}$ and $\mu(\{0\}) = 0.$
	\end{enumerate}
\end{corollary}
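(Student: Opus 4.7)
The plan is to run the cycle (ii) $\Rightarrow$ (i) $\Rightarrow$ (iii) $\Rightarrow$ (ii), using Lemma \ref{CoV} to transport the problem between $[0, \infty)$ and $[0, 1]$, and leveraging the uniqueness of the representing measure of a Hausdorff log-moment sequence (via the M\"untz--Sz\'asz theorem) to close the loop.

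The implication (ii) $\Rightarrow$ (i) is immediate: substituting $w_1 = \nu(K)$ into \eqref{common-exp-0} kills the perturbation at infinity and leaves $L_{\mathbf w}(p) = \int_K p(s)\,\nu(ds)$, exhibiting $\nu$ itself as a $K$-representing measure. For (i) $\Rightarrow$ (iii), I would take a $K$-representing measure $\tilde\nu$ of $L_{\mathbf w}$ and set $\mu := \psi_*\tilde\nu$ with $\psi(s) = e^{-s}$. Since $\psi$ maps $[0,\infty)$ homeomorphically onto $(0,1]$, $\mu$ is a finite Radon measure on $[0,1]$ supported in $\psi(K) \cup \{0\}$ with $\mu(\{0\}) = \tilde\nu(\psi^{-1}(\{0\})) = 0$, while $0 \in \overline{\psi(K)}$ is forced by the unboundedness of $K$. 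Lemma \ref{CoV}(ii) then converts the $n^{-s}$-moments of $\tilde\nu$ into the $\log n$-moments of $\mu$, so that $\mu$ represents $\mathbf w$.

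For (iii) $\Rightarrow$ (ii), I would introduce the auxiliary measure $\mu_0 := (w_1 - \nu(K))\delta_0 + \psi_*\nu$, which by \eqref{common-exp} and Lemma \ref{CoV}(ii) shares the same $\log n$-moments as $\mu$ for every $n \geq 1$. Since $\mu$ and $\mu_0$ are both finite positive Radon measures on the compact space $[0,1]$, the uniqueness of the representing measure of the Hausdorff log-moment sequence $\mathbf w$ will yield $\mu = \mu_0$, and evaluating at $\{0\}$ together with $\mu(\{0\}) = 0$ gives $w_1 = \nu(K)$. The main obstacle I expect is justifying this uniqueness: it reduces via Riesz representation to showing that the linear span $\mathcal E[t]$ of $\{t^{\log n}\}_{n \geq 1}$ is dense in $C([0, 1], \mathbb R)$, which I would deduce from the M\"untz--Sz\'asz theorem, whose hypothesis $\sum_{n \geq 2} 1/\log n = \infty$ holds because $1/\log n > 1/n$ for $n \geq 2$. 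Once the density is in hand, the remaining change-of-variables bookkeeping is routine.
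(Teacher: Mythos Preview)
Your proof is correct and follows essentially the same route as the paper: pass between $K\subseteq[0,\infty)$ and $\widetilde K\subseteq[0,1]$ via $\psi(s)=e^{-s}$, and then settle the key implication by invoking uniqueness of the representing measure of the Hausdorff log-moment sequence $\mathbf w$. The differences are organizational and in one auxiliary lemma. The paper proves the two equivalences (i)$\Leftrightarrow$(ii) and (ii)$\Leftrightarrow$(iii) separately; in particular, its (i)$\Rightarrow$(ii) compares $\psi_*\gamma$ with $\psi_*\nu$ directly and shows the signed difference would have to be a nonzero multiple of $\delta_0$, which is impossible since neither push-forward sees $0$. Your cycle routes this through (iii) instead, comparing the given $\mu$ with the auxiliary $\mu_0=(w_1-\nu(K))\delta_0+\psi_*\nu$; this is a clean repackaging of the same idea. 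For the density of $\mathcal E[t]$ in $C([0,1],\mathbb R)$ that underlies uniqueness, the paper uses Stone--Weierstrass (noting that $\mathcal E[t]$ is a unital subalgebra separating points, since $f_mf_n=f_{mn}$), which is more elementary here than the M\"untz--Sz\'asz theorem you invoke, though your divergence check $\sum_{n\ge 2}1/\log n=\infty$ is of course correct.
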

\begin{proof} 
(i)$\Leftrightarrow$(ii):   Let $L_{\mathbf w}$ be a $K$-moment functional with a representing measure $\gamma.$  Combining (i) with \eqref{common-exp}, we obtain
%\begin{equation*}
%		(w_1-\nu(K)) \, \lim_{s \rar \infty}q(s) = \int_{K} q(s) \gamma(ds) - \int_{K} q(s) \nu(ds), \quad   q \in \mathcal D[s].
%	\end{equation*}
%	Letting $q(s)=n^{-s},$ we get 
	\begin{equation*}
		(w_1-\nu(K)) \, \chi_{_{\{1\}}}\!(n) = \int_{K} n^{-s} \gamma(ds) -  \int_{K} n^{-s} \nu(ds), \quad   n \geqslant 1.
	\end{equation*}
By Lemma~\ref{CoV}(ii) $($with $\psi$ given by \eqref{phi-psi}$),$ 
\begin{equation*}
		(w_1-\nu(K)) \, \chi_{_{\{1\}}}\!(n) = \int_{\psi(K)} t^{\log(n)} \psi_*\gamma(dt) -  \int_{\psi(K)} t^{\log(n)} \psi_*\nu(dt), \quad   n \geqslant 1.
	\end{equation*}
%	where $\widetilde{K} = \psi(K)$ is a subset of $(0, 1].$
	Let $\delta_0$ denote the atomic measure with point mass at $\{0\}$ and $\eta$ denote the trivial extension of $\psi_*\gamma - \psi_*\nu$ to $[0, 1].$  Then
	 \beq \label{L-w-proof}
		(w_1-\nu(K))  \int_{[0, 1]} t^{\log(n)} \delta_0(dt)= \int_{[0, 1]} t^{\log(n)} \eta(dt),  \quad  n \geqslant 1.
	\eeq
By the Stone-Weierstrass theorem (see \cite[Theorem 2.5.5]{Si-1}), the subspace $\mathcal S = \mbox{span}_{\mathbb R}\{t^{\log(n)}\}_{n \Ge 1}$ is dense in the space $C([0,1], \mathbb R)$ of real-valued continuous functions of $[0, 1].$ Hence, the identity \eqref{L-w-proof} holds for any $f \in C([0,1], \mathbb R).$ If $w_1 \neq \nu(K),$ then
by the Riesz-Markov theorem (see \cite[Theorem 4.8.8]{Si-1}), we obtain
$$(w_1-\nu(K))\delta_0 = \eta.$$
However, in that case, $\eta$ would be supported only at $\{0\},$ which is contrary to the definition of $\eta.$ This yields $w_1 = \nu(K)$ completing the verification of (i)$\Rightarrow$(ii).
 Clearly, if $w_1 = \nu(K),$ then by \eqref{common-exp-0}, $L_{\mathbf w}$ is a $K$-moment functional with representing measure $\gamma = \nu.$ 

 (ii)$\Leftrightarrow$(iii): If $w_1 = \nu(K),$ then by \eqref{common-exp}, 
 	\beqn
 w_n =  \int_{K} n^{-s}\nu(ds), \quad n \geqslant 1.
 \eeqn
 Since $0 \notin \psi(K)$ (see \eqref{phi-psi}), one can extend ${\psi_* \nu}$ trivially to a measure $\mu$ concentrated on $\psi(K) \cup \{0\}$ such that 
 $\mu(\{0\})=0.$
 One may now conclude from Lemma~\ref{CoV}(ii) that $\mathbf w$ is a Hausdorff log-moment sequence with representing measure $\mu.$ 
 On the other hand, if $\mathbf w$ is a Hausdorff log-moment sequence with representing measure $\mu$ concentrated on $\widetilde{K}$ such that $0 \in \widetilde{K}$ and $\mu(\{0\}) =0,$ then by \eqref{nu-mu-rel-1} (see the first proof of Theorem~\ref{Q1-DS-Coro}), we obtain  $$\mu(\{0\}) = w_1-\varphi_*\mu(K),$$  and hence $w_1=\varphi_*\mu(K).$
%forcing $\mu=\nu$ or $\eta =0.$ This together with \eqref{L-w-proof} contradicts the assumption that $w_1 \neq \nu([0, \infty)).$  
%We claim that there exists no Radon measure $\nu$ on $[0, \infty)$ such that 
%$\chi_{_{\{1\}}}\!(n)=\int_{[0, \infty)} n^{-s}\nu(ds)$ for every integer $n \geqslant 1.$ Indeed, if such a measure exists, then by Lemma~\ref{CoV}(ii) $($with $\psi$ given by \eqref{phi-psi}$),$ 
%\beq \label{contra}
%\chi_{_{\{1\}}}\!(n)=\int_{(0, 1]} t^{\log(n)}\psi_*\nu(dt), \quad n \geqslant 1,
%\eeq
%and hence by the uniqueness of the Dirichlet moment sequence $($since $\psi_*\nu$ is a Radon measure$),$  $\psi_*\nu$ must coincide with the measure $\delta_0$ supported at $\{0\}.$ 
%Thus the claim stands verified.  
%Thus the perturbation appearing in the conclusion of Proposition~\ref{Q1-DS-Coro} can not be removed unless $w_1=\nu([0, \infty)).$ 
%In other words, if $L_{\mathbf w}$ is $\mathcal D_{+}[s]$-positive, then $L_{\mathbf w}$ is a moment functional if and only if $w_1 = \nu([0, \infty)).$
\end{proof}

It is well-known that the Riesz-Haviland theorem can be employed to obtain the solution of the multi-dimensional Hausdorff moment problem (see \cite[Section 3.2]{Sc}; for different variants of Riesz-Haviland theorem, see \cite[Theorem~1.1]{CF}, \cite[Theorems~A and B]{CSS} and \cite[Theorem~2.6]{Am}). Thus, in view of Proposition~\ref{Q1-DS} and Theorem~\ref{Q1-DS-Coro}, it is natural to look for a solution of the Hausdorff log-moment problem. Although we could not obtain an intrinsic characterization of Hausdorff log-moment sequences (unlike the case of Hausdorff moment sequences; see \cite[Theorem 4.17.4]{Si-1}), it is possible to obtain a handy characterization of these sequences that exploits the theory of completely monotone functions  (we would like to draw attention of the reader to \cite{Liu}, which explores a connection between Dirichlet series and completely monotone functions in the context of an approximation problem). 

	\begin{theorem} \label{thm-char-DMS}
For a positive integer $j,$ let $\{w_{n}\}_{n \geqslant j}$ be a sequence of non-negative real numbers. Then the following statements are valid:
\begin{enumerate}
%\item[(i)] $\{w_{n}\}_{n \geqslant 1}$ is a Dirichlet moment sequence if and only if 
%there exist a nonnegative real number $v_0 \leqslant w_1$ and a unique completely monotone function $f : [0,\infty) \rar [0, \infty)$ such that 
%\begin{equation}
%w_n - v_0 \delta_{1, n} = f(2 \log(n)),    \quad n \geqslant 1,
%\end{equation}
%where $\delta_{m, n}$ denotes the Kronecker delta function defined on $\mathbb N \times \mathbb N.$
%If $\{w_{n}\}_{n \geqslant 1}$ is a Dirichlet moment sequence with the representing measure $\mu$, then we may choose $v_0$ to be $\mu(\{0\}).$ 
\item[(i)] $\{w_{n}\}_{n \geqslant 1}$ is a Hausdorff log-moment sequence if and only if 
there exists a unique completely monotone function $f : [0,\infty) \rar [0, \infty)$ such that $f(0) \leqslant w_1$ and
\beq \label{DMS-formula-1}
w_n = (w_1-f(0)) \, \chi_{_{\{1\}}}(n) + f(\log(n)),    \quad n \geqslant 1,
\eeq
where $\chi_{_{\{1\}}} : \mathbb Z_+ \rar \mathbb R$ denotes the indicator function of $\{1\}.$ 	
If $\{w_{n}\}_{n \geqslant 1}$ is a Hausdorff log-moment sequence with the representing measure $\mu$, then we may choose $f(0)$ to be $w_1-\mu(\{0\}).$ 
\item[(ii)] for $j \geqslant 2,$ $\{w_{n}\}_{n \geqslant j}$ is a Hausdorff log-moment sequence if and only if there exists a unique completely monotone function $f : [\log(j),\infty) \rar [0, \infty)$ such that 
\beq \label{DMS-formula-2}
w_n = f(\log(n)),    \quad n \geqslant j.
\eeq
\end{enumerate}
If this happens, then the representing measure of $f$ is equal to $\varphi_*\mu$ $($see \eqref{phi-psi}$).$
\end{theorem}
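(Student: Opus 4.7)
The plan is to identify the completely monotone function $f$ via Bernstein's theorem, which asserts that $g : [a,\infty) \to [0,\infty)$ is completely monotone if and only if $g(x) = \int_{[0,\infty)} e^{-sx}\,\rho(ds)$ for a unique positive Borel measure $\rho$ on $[0,\infty)$ satisfying $\int e^{-sa}\,\rho(ds) < \infty$. The bridge between this formula and the Hausdorff log-moment integral is Lemma~\ref{CoV}, which converts $\int t^{\log n}\,\mu(dt)$ on $(0,1]$ into $\int e^{-s\log n}\,\varphi_*\mu(ds)$ on $[0,\infty)$ via $\varphi(t)=-\log t$.

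For part (i), starting from a representing measure $\mu$ of $\mathbf w$, I split $\mu = \mu(\{0\})\delta_0 + \mu|_{(0,1]}$. By Remark~\ref{rmk-cone}(1), $\mu$ is finite with total mass $w_1$, and since $0^{\log n} = 0$ for $n \geq 2$, Lemma~\ref{CoV}(i) yields
\[
w_n \;=\; \mu(\{0\})\,\chi_{_{\{1\}}}(n) + \int_{[0,\infty)} e^{-s\log n}\,\varphi_*\mu(ds), \qquad n \geq 1.
\]
Defining $f(x) := \int_{[0,\infty)} e^{-sx}\,\varphi_*\mu(ds)$ gives a completely monotone function on $[0,\infty)$ with $f(0) = \mu((0,1]) = w_1 - \mu(\{0\}) \leq w_1$, and substitution produces \eqref{DMS-formula-1}. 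Conversely, given such an $f$, Bernstein produces a finite positive measure $\rho$ with $f(x) = \int e^{-sx}\rho(ds)$, and $\mu := (w_1-f(0))\delta_0 + \psi_*\rho$ is verified via Lemma~\ref{CoV}(ii) to be a representing measure of $\mathbf w$.

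Part (ii) follows the same scheme, but without the $\delta_0$ contribution (since $0^{\log n} = 0$ for $n \geq j \geq 2$). The new subtlety is that $\mu$, and hence the Bernstein measure $\rho$, may be infinite. One checks that $\rho := \varphi_*(\mu|_{(0,1]})$ has Laplace transform $w_j < \infty$ at $\log j$, so that $f(x) := \int e^{-sx}\rho(ds)$ is well-defined and completely monotone on $[\log j,\infty)$. Conversely, the Bernstein measure $\rho$ of $f$ pushes forward to $\psi_*\rho$ on $(0,1]$, which is locally finite: any compact $K \subset (0,1]$ lies in some $[\varepsilon,1]$, and $\psi_*\rho([\varepsilon,1]) = \rho([0,-\log\varepsilon]) \leq \varepsilon^{-\log j}\,f(\log j)$. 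Trivially extending $\psi_*\rho$ to $[0,1]$ yields the required representing measure, and the identification of $\varphi_*\mu$ as the Bernstein representing measure of $f$ is immediate from these constructions and Bernstein's uniqueness.

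The main obstacle I anticipate is the uniqueness of $f$. Both parts reduce to the following claim: if two finite positive Borel measures $\rho_1,\rho_2$ on $[0,\infty)$ (in (ii), after weighting by $e^{-s\log j}$ to enforce finiteness) satisfy $\int n^{-s}\,\rho_1(ds) = \int n^{-s}\,\rho_2(ds)$ for all $n \geq j$, then $\rho_1 = \rho_2$. Pushing forward under $\psi$, the signed finite measure $\sigma := \psi_*(\rho_1 - \rho_2)$ on $(0,1]$ satisfies $\int_{(0,1]} t^{\log n}\,d\sigma = 0$ for $n \geq j$. The key observation is that the linear span $\mathcal A := \mathrm{span}\{t^{\log n} : n \geq j\}$ is automatically closed under multiplication, since $t^{\log m}\cdot t^{\log n} = t^{\log(mn)}$ with $mn \geq j$; hence $\mathcal A$ is a subalgebra of $C([0,1])$ that vanishes at $0$, separates points of $(0,1]$, and vanishes nowhere on $(0,1]$. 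The Stone--Weierstrass theorem then makes $\mathcal A$ dense in $\{g \in C([0,1]) : g(0) = 0\}$, which forces $\sigma = 0$ on $(0,1]$ and hence $\rho_1 = \rho_2$.
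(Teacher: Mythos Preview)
Your proof is correct. The existence direction coincides with the paper's: split off the atom at $0$, use Lemma~\ref{CoV} to convert the $(0,1]$-integral into a Laplace integral, and read off $f$ via Bernstein's theorem (the converse is the content of Lemma~\ref{rmk-fact}).

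The genuine difference is the uniqueness argument. The paper extends each candidate $f_k$ to a bounded holomorphic function on a half-plane and applies Carlson's theorem (\cite[Corollary~9.5.4]{Bo}) to conclude that two such functions agreeing on $\{\log n : n \geqslant j\}$ must coincide. You instead stay on the real side: pass to the Bernstein measures $\rho_k$, weight by $j^{-s}$ in part~(ii) to force finiteness (note $\int n^{-s}\,\tilde\rho_k(ds) = w_{nj}$, so the weighted measures actually match for all $n \geqslant 1$), push forward by $\psi$, and observe that $\mathrm{span}\{t^{\log n} : n \geqslant j\}$ is a point-separating subalgebra of $C_0((0,1])$, whence Stone--Weierstrass and Riesz--Markov finish the job. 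One small wrinkle: in part~(i) the relation \eqref{DMS-formula-1} is vacuous at $n=1$, so you only know $f_1(\log n)=f_2(\log n)$ for $n \geqslant 2$; thus your reduction is really to the claim with $j \geqslant 2$, which is exactly the regime where your algebra $\mathcal A$ vanishes at $0$ as needed. Your route is more elementary and reuses machinery already present in the paper (cf.\ Proposition~\ref{unique}, which is the finite-measure prototype of your argument); the paper's complex-analytic route, on the other hand, delivers the holomorphic extension of $f$ as a byproduct.
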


It is natural to compare the formulas \eqref{common-exp} and \eqref{DMS-formula-1}, and it is tempting to explore the possibility to deduce the existence part of Theorem~\ref{thm-char-DMS}(i) from Theorem~\ref{Q1-DS-Coro}. Unfortunately, the method of proof of Theorem~\ref{Q1-DS-Coro} does not extend beyond the case $j=1.$ 

The proof of Theorem~\ref{thm-char-DMS} is fairly long and it occupies most of Sections~4 and 5. It is worth noting that the proof of the uniqueness part of this theorem relies on Carlson-Fuchs's uniqueness theorem for holomorphic functions on the right half plane of exponential type (see \cite{Bo, Fu}). 
In Section~5, we also discuss applications of Theorem~\ref{thm-char-DMS} to the Helson matrices and the completely monotone sequences (see Corollaries~\ref{coro-Helson} and \ref{coro-CM}).

\section{Basic properties}	
In this section, we closely examine Hausdorff log-moment sequences. In particular, we present  several algebraic and structural properties of the Hausdorff log-moment sequences. Some of these properties will be used in the proof of Theorem~\ref{thm-char-DMS}.

%		This motivates the question, does the above identity holds true for an arbitrary DMS $\{w_{i}\}_{i \geq 1}?$
%		
%		Answer to the above question is affirmative and will be seen in observations below.

%	\subsection{DMS Properties}  
The following proposition provides some algebraic properties of the Hausdorff log-moment sequences:
\begin{proposition} \label{prop-DMS-properties}
For a positive integer $j,$ let $\mathbf w = \{w_{n}\}_{n \geqslant j}$ be a Hausdorff log-moment sequence with a representing measure $\mu$ and $\mathbf v = \{v_{n}\}_{n \geqslant j}$ be a Hausdorff log-moment sequence with representing measure  $\nu.$
The following statements are valid:
\begin{itemize}
		\item[(i)] 
$\mathbf w + \mathbf v$ is a Hausdorff log-moment sequence with representing measure  $\mu+\nu,$
	\item[(ii)] if $c \geqslant 0$ is a real number, then $c \mathbf w$ is a Hausdorff log-moment sequence with representing measure $c\mu,$ 
		\item[(iii)]	 if $\mu(\{0\}) =0$ and $\nu(\{0\}) =0,$ then the point-wise product $\mathbf w \mathbf v $ of $\mathbf w$ and $\mathbf v$ is a Hausdorff log-moment sequence with representing measure equal to $f_* (\mu|_{_{(0, 1]}} \otimes \nu|_{_{(0, 1]}}),$ where $f : [0,1] \times [0,1] \longrightarrow [0,1]$ is given by $f(x,y) = xy$ and $\mu|_{_{(0, 1]}} \otimes \nu|_{_{(0, 1]}}$ is a uniquely determined Radon measure concentrated on $(0, 1] \times (0, 1]$ such that the restriction $\mu|_{_{(0, 1]}} \otimes \nu|_{_{(0, 1]}}$ to the product $\sigma$-algebra coincides with the product measure $\mu|_{_{(0, 1]}} \times \nu|_{_{(0, 1]}}.$ 
	\end{itemize}
	In particular, the set of Hausdorff log-moment sequences forms a convex cone.
\end{proposition}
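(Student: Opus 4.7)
For parts (i) and (ii) my plan is to appeal directly to the linearity of the Lebesgue integral. Both $\mu+\nu$ and $c\mu$ are positive Borel measures on $[0,1]$ whose restrictions to $(0,1]$ remain Radon (Radon measures on a locally compact Hausdorff space form a cone under addition and non-negative scaling), and the identities
\[
\int_{[0,1]} t^{\log(n)}(\mu+\nu)(dt)=w_n+v_n,\qquad \int_{[0,1]} t^{\log(n)}(c\mu)(dt)=c\,w_n
\]
follow at once. The closing convex cone assertion is immediate from (i) and (ii).

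For (iii) I would first justify the existence and uniqueness of the Radon measure $\mu|_{_{(0,1]}}\otimes\nu|_{_{(0,1]}}$ announced in the statement. Since $(0,1]$ is second countable, locally finite Radon measures on it are $\sigma$-finite, and the Borel $\sigma$-algebra of $(0,1]\times(0,1]$ agrees with the product of the Borel $\sigma$-algebras. The classical product measure construction therefore yields a $\sigma$-finite Borel measure on $(0,1]\times(0,1]$; its Radon regularity can be read off from the Radon regularity of the factors on relatively compact open rectangles via a standard monotone-class argument, and uniqueness on Borel sets follows from the uniqueness of $\sigma$-finite measures agreeing on the $\pi$-system of rectangles.

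The core computation then proceeds in three steps. First, using the hypotheses $\mu(\{0\})=\nu(\{0\})=0$, I reduce $w_n$ and $v_n$ to integrals over $(0,1]$ against the Radon restrictions. Second, by Tonelli applied to the non-negative integrand $(x,y)\mapsto(xy)^{\log(n)}$ and the $\sigma$-finite product measure, I rewrite $w_n v_n$ as a single integral over $(0,1]\times(0,1]$ against $\mu|_{_{(0,1]}}\otimes\nu|_{_{(0,1]}}$. Third, the change-of-variable formula for image measures (as recalled before Lemma~\ref{CoV}) applied to $f(x,y)=xy$ transfers this to an integral over $(0,1]$ against $f_{*}(\mu|_{_{(0,1]}}\otimes\nu|_{_{(0,1]}})$. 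This push-forward is Radon on $(0,1]$ because $f^{-1}([a,1])$ is relatively compact in $(0,1]\times(0,1]$ for every $a\in(0,1]$, and its trivial extension by $0$ at the point $0$ furnishes a representing measure of $\mathbf w\mathbf v$ on $[0,1]$.

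I expect the main obstacle to be the measure-theoretic bookkeeping in (iii): the factor measures are not assumed finite, so $\sigma$-finiteness has to be invoked consistently when constructing the product, applying Tonelli, and inheriting the Radon property on the push-forward; the role of the assumption $\mu(\{0\})=\nu(\{0\})=0$ is precisely to let one pass from $[0,1]$-integrals to $(0,1]$-integrals without losing any mass at the boundary $t=0$, where the integrand $t^{\log(n)}$ is discontinuous at $n=1$. Once these points are secured, the substantive step collapses to a one-line Tonelli plus image-measure identity.
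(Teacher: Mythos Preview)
Your proposal is correct and follows essentially the same route as the paper: parts (i) and (ii) by linearity (the paper leaves these to the reader), and part (iii) by establishing the Radon product on $(0,1]\times(0,1]$, applying Fubini--Tonelli to the non-negative integrand $(x,y)\mapsto(xy)^{\log(n)}$, and pushing forward under $f(x,y)=xy$. The only cosmetic difference is that where the paper cites results from \cite{BCR} (Corollary~2.1.11 for the Radon product, Theorem~2.1.12 for Fubini--Tonelli, Proposition~2.1.15 for the Radon property of the push-forward), you supply direct arguments---$\sigma$-finiteness via second countability, and local finiteness of $f_{*}(\mu|_{(0,1]}\otimes\nu|_{(0,1]})$ via the observation that $f^{-1}([a,1])\subseteq[a,1]\times[a,1]$ is compact in $(0,1]\times(0,1]$.
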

\begin{proof}
We leave the verification of (i) and (ii) to the reader. 

To see (iii), assume that
$\mu(\{0\})=0$ and $\nu(\{0\})=0.$ By Remark~\ref{rmk-cone}(2), $\mu$ and $\nu$ are $\sigma$-finite measures. 
By \cite[Corollary~2.1.11]{BCR}, there exists a uniquely determined Radon measure $\mu|_{_{(0, 1]}} \otimes \nu|_{_{(0, 1]}}$ such that the restriction of $\mu|_{_{(0, 1]}} \otimes \nu|_{_{(0, 1]}}$ to the product $\sigma$-algebra coincides with the product measure $\mu|_{_{(0, 1]}} \times \nu|_{_{(0, 1]}}.$ 
Note that for any integer $n \Ge j$ and $g : [0,1] \longrightarrow [0,1]$ given by $g(x) = x^{\log(n)}$, 
\allowdisplaybreaks
	\beqn
			w_{n}v_{n} & =& \int_{[0,1]} \int_{[0,1]} (xy)^{\log(n)} \mu(dx) \nu(dy)\\
			& = & \int_{(0,1]} \int_{(0,1]} g\circ f(x,y)  \mu|_{_{(0, 1]}}(dx) \nu|_{_{(0, 1]}}(dy)\\
			& \overset{(*)}=& \int_{(0,1] \times (0,1]} g\circ f(x, y)    \mu|_{_{(0, 1]}} \otimes \nu|_{_{(0, 1]}}(d(x, y))\\
			& =& \int_{(0,1]} g(x) f_* (\mu|_{_{(0, 1]}} \otimes \nu|_{_{(0, 1]}})(dx) \quad (\mbox{by change of variables})  \\ 
			& = & \int_{[0,1]} x^{\log(n)} f_* (\mu|_{_{(0, 1]}} \otimes \nu|_{_{(0, 1]}})(dx),
\eeqn
		where ($\ast$) follows from an analogue of Fubini-Tonelli theorem for Radon measures (see \cite[Theorem~2.1.12]{BCR}).
Since $f_* (\mu|_{_{(0, 1]}} \otimes \nu|_{_{(0, 1]}})\big|_{(0, 1]}$ is a Radon measure (see \cite[Proposition~2.1.15]{BCR}), $\mathbf w \mathbf v$ is a Hausdorff log-moment sequence with representing measure $f_* (\mu|_{_{(0, 1]}} \times \nu|_{_{(0, 1]}}).$
\end{proof}

The following proposition provides some structural properties of Hausdorff log-moment sequences (cf. \cite[Theorem 4.17.4]{Si-1}). The reader is referred to \cite{BCR, SSV, W} for the definitions of positive semi-definite matrix, completely monotone sequence, minimal completely monotone sequence, completely monotone function, Bernstein function and related notions.
\begin{proposition} \label{prop-DMS-properties-1}
For a positive integer $j,$ let $\{w_{n}\}_{n \geqslant j}$ be a Hausdorff log-moment sequence with a representing measure $\mu$ concentrated on $[0, 1].$ The following statements are valid: 
\begin{itemize}
		\item[(i)]  for every finite subset $F$ of integers $n \geqslant j$, the matrix  $(w_{pq})_{p, q \in F}$ is positive semi-definite,
		\item[(ii)] if $k \geqslant j$ is an integer such that $k \geqslant j,$ then $\{w_{k^{m+1}}\}_{m \geqslant 0}$ is a completely monotone sequence,
		\item[(iii)] for every integer $k\geqslant j,$ $\{w_{kn}\}_{n \geqslant 1}$ is a Hausdorff log-moment sequence with representing measure given by $t^{\log(k)} \mu(dt).$
%		\item[(v)] The set of Dirichlet moment sequences forms a convex cone. 
%		\item[(vi)]	  Assume that $\mu(\{0\}) \in [0, \infty).$ If $(\{x_{n}\}_{n\geqslant j}$ is another Dirichlet moment sequence with a representing measure $\nu$ such that $\nu(\{0\}) \in [0, \infty),$ then $\{w_{n}x_{n}\}_{n\geqslant 1}$ is a Dirichlet moment sequence with representing measure equal to $f_* (\mu \times \nu),$ where $f : [0,1] \times [0,1] \longrightarrow [0,1]$ is given by $f(x,y) = xy$. 
	\end{itemize}
\end{proposition}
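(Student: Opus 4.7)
Each of the three assertions reduces to a direct computation with the representing measure $\mu$, supplemented in (ii) by an invocation of the classical Hausdorff moment theorem.

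For (i), given real scalars $\{c_p\}_{p \in F}$, the identity $\log(pq) = \log(p) + \log(q)$ together with the integral representation of $w_{pq}$ yields
\[
\sum_{p, q \in F} c_p c_q\, w_{pq} \;=\; \int_{[0,1]} \sum_{p, q \in F} c_p c_q\, t^{\log(p)}\, t^{\log(q)}\, \mu(dt) \;=\; \int_{[0,1]} \Bigl(\sum_{p \in F} c_p\, t^{\log(p)}\Bigr)^{\!2}\, \mu(dt) \;\geqslant\; 0,
\]
which is positive semi-definiteness in the required form.

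For (ii), I will realize $\{w_{k^{m+1}}\}_{m \geqslant 0}$ as the ordinary moment sequence of a finite positive Borel measure on $[0, 1]$ and then apply Hausdorff's theorem \cite[Theorem~4.17.4]{Si-1}. Define $g : [0, 1] \to [0, 1]$ by $g(t) = t^{\log(k)}$ (with the convention $0^0 = 1$, relevant only when $k = 1$), and let $g_*\mu$ denote the push-forward. Since $t^{\log(k^{m+1})} = g(t)^{m+1}$, a standard change of variables gives
\[
w_{k^{m+1}} \;=\; \int_{[0, 1]} g(t)^{m+1}\, \mu(dt) \;=\; \int_{[0, 1]} s^{m+1}\, g_*\mu(ds) \;=\; \int_{[0, 1]} s^m\, \lambda(ds),
\]
where $\lambda(ds) := s\cdot g_*\mu(ds)$ has total mass $\lambda([0, 1]) = w_k < \infty$. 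The finiteness of $\lambda$ allows the Hausdorff theorem to identify $\{w_{k^{m+1}}\}_{m \geqslant 0}$ as a completely monotone sequence.

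For (iii), the factorization $t^{\log(kn)} = t^{\log(k)}\, t^{\log(n)}$ gives
\[
w_{kn} \;=\; \int_{[0, 1]} t^{\log(n)}\, \widetilde{\mu}(dt), \qquad \widetilde{\mu}(dt) := t^{\log(k)}\, \mu(dt), \quad n \geqslant 1,
\]
so the only remaining task is to confirm that $\widetilde{\mu}$ meets the requirements of a representing measure. As the weight $t \mapsto t^{\log(k)}$ is non-negative, continuous, and bounded on $[0, 1]$, the measure $\widetilde{\mu}|_{(0, 1]}$ inherits local finiteness and inner regularity from $\mu|_{(0, 1]}$ and is therefore Radon. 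The one mildly delicate point in the whole argument is the finiteness step in (ii): when $j \geqslant 2$, the measure $g_*\mu$ may carry infinite mass, but multiplication by $s$ annihilates any atom at $\{0\}$ and produces the genuinely finite measure $\lambda$, after which the classical Hausdorff theorem applies without modification.
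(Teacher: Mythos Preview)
Your proof is correct. Parts (i) and (iii) match the paper's argument essentially verbatim: the integral representation plus $\log(pq)=\log p+\log q$ gives the square-of-a-sum in (i), and the factorization $t^{\log(kn)}=t^{\log k}t^{\log n}$ gives (iii).

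The only genuine divergence is in (ii). The paper verifies complete monotonicity directly by computing the iterated forward differences:
\[
\sum_{i=0}^{n}(-1)^{i}\binom{n}{i}w_{k^{m+i}}
=\int_{[0,1]} t^{m\log k}(1-t^{\log k})^{n}\,\mu(dt)\;\geqslant\;0.
\]
You instead push forward by $g(t)=t^{\log k}$, absorb the extra factor of $s$ into a weighted measure $\lambda(ds)=s\cdot g_*\mu(ds)$, and recognise $\{w_{k^{m+1}}\}_{m\geqslant 0}$ as the ordinary Hausdorff moment sequence of $\lambda$, then quote \cite[Theorem~4.17.4]{Si-1}. Both arguments are short; the paper's is entirely self-contained, whereas yours trades the one-line binomial identity for a citation but in exchange makes the connection to the classical Hausdorff problem explicit. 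Your treatment of the possible non-finiteness of $g_*\mu$ when $j\geqslant 2$ is handled correctly (the weight $s$ kills the atom at $0$), a point the paper's direct computation sidesteps automatically since $t^{m\log k}(1-t^{\log k})^n$ already vanishes at $t=0$ for $m\geqslant 1$.
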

\begin{proof}

%Let $\mu$ be a representing measure of $\{w_{n}\}_{n \geqslant j}.$

(i) For any complex numbers  $c_1, \ldots, c_n$ and integers $k_1, \ldots, k_n$ bigger than or equal to $j,$
\beqn 
\sum_{p, q = 1}^{n} c_{p}\bar{c_{q}} \, w_{_{k_pk_q}} &=& \int_{[0, 1]} \sum_{p, q = 1}^{n} c_{p}\bar{c_{q}} \, t^{\log(k_pk_q)} \mu(dt) \\ &=& \int_{[0, 1]} \Big|\sum_{p = 1}^{n} c_{p} \, t^{\log(k_p)}\Big|^{2} \mu(dt),
\eeqn
which is clearly non-negative. 
		
(ii) 
%In view of the solution of the Hausdorff moment problem (see \cite[Chapter IV, Proposition 6.11]{BCR}), it suffices to check that  for $k \geqslant j$ and $m \geqslant 1,$
%\beqn
%{\sum_{i = 0}^{n} (-1)^{i} {n \choose i} w_{{k}^{m+i}} \geqslant 0}, \quad n \geqslant 0.
%\eeqn
For integers $n \geqslant 0$ and $m \Ge 1,$
\beqn 
\sum_{i = 0}^{n} (-1)^{i} {n \choose i} w_{{k}^{m+i}}  &=& \int_{[0, 1]} \sum_{i = 0}^{n} (-1)^{i} {n \choose i} t^{(m+i) \log(k)} \mu(dt) \\
&=& \int_{[0, 1]} t^{m \log(k)} (1-t^{\log(k)})^{n} \mu(dt),    
\eeqn
which is non-negative, as required.

(iii)  For any integer $k \geqslant j,$ 
\beqn w_{kn} = \int_{[0,1]} t^{\log(kn)} \mu(dt) = \int_{[0,1]} t^{\log(n)} t^{\log(k)}\mu(dt), \quad n \geqslant 1.
\eeqn
Thus $\{w_{kn}\}_{n \geqslant 1}$ is a Hausdorff log-moment sequence with representing measure given by $t^{\log(k)}\mu(dt).$
\end{proof}

Any Hausdorff log-moment sequence $\{w_{n}\}_{n \geqslant 1}$ is {\it determinate}, that is, its representing measure is unique. Indeed, we have following general fact (see Corollary~\ref{determinate} for an improvement).
	\begin{proposition}[Uniqueness] 
\label{unique}	Let $j$ be a positive integer. If $\mu$ and $\nu$ are two finite representing measures for $\{w_{n}\}_{n \geqslant j},$ then $\mu=\nu$ provided they have the same total mass. In particular, if $\{w_{n}\}_{n \geqslant 1}$ is a Hausdorff log-moment sequence, then its representing measure is unique.
\end{proposition}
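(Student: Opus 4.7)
The plan is to reduce everything to a standard Stone–Weierstrass/Riesz–Markov argument on $C([0,1], \mathbb R).$ Both $\mu$ and $\nu$ are, by assumption, finite positive Borel measures on the compact space $[0,1],$ hence finite Radon measures. Thus it suffices to prove that they induce the same linear functional on $C([0,1], \mathbb R),$ which by density will follow once they are shown to agree on a dense subspace.

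The natural candidate is $\mathcal A := \mathrm{span}_{\mathbb R}\{1\} + \mathrm{span}_{\mathbb R}\{f_n : n \geqslant j\},$ where $f_n(t) = t^{\log(n)}.$ First I would verify that $\mathcal A$ is a unital subalgebra of $C([0,1], \mathbb R)$ that separates points: unitality is built in; for the algebra property the key identity is $f_n(t) f_m(t) = t^{\log(n) + \log(m)} = f_{nm}(t),$ which for $n, m \geqslant j$ produces $f_{nm}$ with $nm \geqslant j;$ for point separation, the function $f_n$ is strictly increasing on $[0,1]$ whenever $\log(n) > 0,$ so either $f_2$ (when $j = 1$) or $f_j$ (when $j \geqslant 2$) does the job. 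The Stone–Weierstrass theorem then yields that $\mathcal A$ is uniformly dense in $C([0,1], \mathbb R).$

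Next I would show that the functionals $\Lambda_\mu(f) := \int_{[0,1]} f \, d\mu$ and $\Lambda_\nu(f) := \int_{[0,1]} f \, d\nu$ coincide on $\mathcal A.$ By linearity, this reduces to checking agreement on the constant $1$ and on each $f_n$ with $n \geqslant j.$ Agreement on the $f_n$ is the moment hypothesis $\int f_n\, d\mu = w_n = \int f_n\, d\nu,$ and agreement on $1$ is precisely the assumption that $\mu$ and $\nu$ have the same total mass. Density then forces $\Lambda_\mu = \Lambda_\nu$ on all of $C([0,1], \mathbb R),$ and the Riesz–Markov theorem (applied to finite Radon measures on the compact metric space $[0,1]$) gives $\mu = \nu.$

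For the final \emph{in particular} clause with $j = 1,$ note that by Remark~\ref{rmk-cone}(1) any representing measure of $\{w_n\}_{n \geqslant 1}$ is finite with total mass equal to $w_1,$ so the equal-total-mass hypothesis is automatic and uniqueness follows from the first part. I do not anticipate a real obstacle here; the only point that needs care is the algebra/separation check for Stone–Weierstrass in the range $j \geqslant 2,$ where the constant function $1$ is not a priori among the $f_n$ and must be appended using the equal total mass hypothesis.
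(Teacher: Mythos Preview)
Your proposal is correct and follows essentially the same route as the paper: both use the Stone--Weierstrass theorem on the unital subalgebra spanned by $\{1, f_j, f_{j+1}, \ldots\}$ (the paper writes this as $\mathrm{span}\{f_1, f_j, f_{j+1}, \ldots\}$, noting $f_1 \equiv 1$), followed by Riesz--Markov to conclude $\mu = \nu$. Your write-up is in fact slightly more explicit about point separation and about why the equal-total-mass hypothesis is exactly what is needed to handle the constant function when $j \geqslant 2$.
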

\begin{proof} Let $\mu$ and $\nu$ be two finite representing measures for $\{w_{n}\}_{n \geqslant j}$ and assume that $\mu([0, 1]) = \nu([0, 1]) < \infty.$ For a positive integer $n,$ consider the function $f_{n}(t) = t^{\log(n)}, t\in[0,1].$
Let $\mathcal S$ denote the real linear span of $\{f_1, f_{j}, f_{j+1}, \ldots, \}.$
	Note that
\beq
\label{unique-rm}
\int_{[0, 1]} f(t)\mu(dt) = \int_{[0, 1]} f(t)\nu(dt), \quad f \in \mathcal S.
\eeq
Since $f_m f_n=f_{mn},$ $m, n \in \mathbb Z_+,$ $\mathcal S$ is a real unital sub-algebra of $C([0, 1], \mathbb R).$ 
Hence, by the Stone-Weierstrass theorem (see \cite[Theorem 2.5.5]{Si-1}), $\mathcal S$ is dense in  $C([0,1], \mathbb R).$ Thus, the identity \eqref{unique-rm} holds for any $f \in C([0,1], \mathbb R).$
The desired conclusion now follows from the Riesz-Markov theorem (see \cite[Theorem 4.8.8]{Si-1}). 
\end{proof}

The notion of so-called minimal Hausdorff log-moment sequences plays a role in the proof of Theorem~\ref{thm-char-DMS} and this notion is similar to the one that appears in the context of completely monotone sequence (refer to \cite[Chapter IV.14]{W}).
\begin{definition}
A Hausdorff log-moment sequence $\mathbf w = \{w_{n}\}_{n \Ge 1}$ is said to be {\it minimal} if for any $\epsilon \textgreater 0,$  
$\mathbf w - \epsilon \chi_{\{1\}}$ is not a Hausdorff log-moment sequence,  where 
$\chi_{_{\{1\}}} : \mathbb Z_+ \rar \mathbb R$ denotes the indicator function of $\{1\}.$
\end{definition}

As in the case of completely monotone sequences, the minimal Hausdorff log-moment sequences can be described easily. 
%Before we see that we record the following elementary facts for the sake of completeness: 
%%(which is immediate from the fact that sum of Radon measures is a Radon measure).
%\begin{lemma} \label{mu-0-lem} The following statements are valid:
%\begin{enumerate}
%\item[(i)] if $\mu$ is a positive Borel measure $($resp. a Radon measure$)$ concentrated on $[0, 1],$ then so is 
%the measure $\mu_0$ on $[0, 1]$ given by
%\begin{equation} \label{mu-0-def} 
%\text{$\mu_0(\sigma)  = \mu(\sigma \setminus \{0\})$ for every Borel subset $\sigma$ of $[0, 1],$}
%\end{equation} 
%%If $\mu$ is a Radon measure, then so is $\mu_0.$ 
%\item[(ii)] if $\mu$ is a positive Borel measure $($resp. a finite Radon measure$)$ concentrated on $(0, 1],$ then so is the measure $\hat{\mu}$ on $[0, 1]$ given by
%\begin{equation} \label{mu-hat-def} 
%\text{$\hat{\mu}(\sigma)  = \mu(\sigma \setminus \{0\})$ for every Borel subset $\sigma$ of $[0, 1].$}
%\end{equation} 
%%If $\mu$ is a finite Radon measure, then so is $\hat{\mu}.$ 
%\end{enumerate}
%\end{lemma}

We now identify minimal Hausdorff log-moment sequences.
	\begin{proposition} 
\label{minimal}	
A Hausdorff log-moment sequence $\mathbf w = \{w_{n}\}_{n \Ge 1}$ is minimal if and only if the representing measure $\mu$ of $\mathbf w$ satisfies $\mu(\{0\})=0.$ 
	\end{proposition}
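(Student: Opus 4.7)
My plan is to derive both implications from two earlier facts: the uniqueness of the representing measure (Proposition~\ref{unique}) and the observation from Example~\ref{Exm-DMS-1}(c) that $\chi_{_{\{1\}}} = \{1, 0, 0, \ldots\}$ is a Hausdorff log-moment sequence with representing measure $\delta_0.$ The underlying identity is that $\int_{[0,1]} t^{\log(n)}\,\delta_0(dt) = \chi_{_{\{1\}}}(n)$ for every $n \geqslant 1,$ which lets one trade subtracting $\epsilon \chi_{_{\{1\}}}$ from $\mathbf w$ for subtracting $\epsilon \delta_0$ from a representing measure of $\mathbf w.$

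For the direction $\mu(\{0\}) > 0 \Rightarrow \mathbf w$ is not minimal, I would set $\epsilon := \mu(\{0\}) > 0$ and take $\mu' := \mu - \epsilon \delta_0.$ This is a positive Borel measure on $[0, 1]$ whose restriction to $(0, 1]$ equals $\mu|_{(0, 1]},$ hence is Radon and thus qualifies as a representing measure; a direct computation (using that $0^{\log(n)}=0$ for $n \geqslant 2$ while $0^{\log(1)}=1$) then shows that $\mu'$ represents $\mathbf w - \epsilon \chi_{_{\{1\}}},$ so $\mathbf w$ is not minimal.

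Conversely, assume $\mu(\{0\}) = 0,$ and suppose for contradiction that for some $\epsilon > 0$ the sequence $\mathbf w - \epsilon \chi_{_{\{1\}}}$ admits a representing measure $\nu.$ By Remark~\ref{rmk-cone}(1), both $\mu$ and $\nu$ are finite with total masses $w_1$ and $w_1 - \epsilon$ respectively, so $\nu + \epsilon \delta_0$ is a finite positive Borel measure on $[0,1]$ of total mass $w_1$ that represents $\mathbf w.$ Proposition~\ref{unique} then forces $\mu = \nu + \epsilon \delta_0,$ which yields $\mu(\{0\}) \geqslant \epsilon > 0,$ contradicting the standing assumption.

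I do not foresee any serious obstacle; the argument is essentially bookkeeping with representing measures. The only point that requires care is ensuring the total masses agree before invoking Proposition~\ref{unique}, which is exactly why the identification of the total mass with $w_1$ from Remark~\ref{rmk-cone}(1) is used in the ``$\Leftarrow$'' direction; without normalizing the masses, uniqueness would not apply.
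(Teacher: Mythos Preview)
Your proposal is correct and follows essentially the same route as the paper's proof: both directions hinge on the identity $\int_{[0,1]} t^{\log(n)}\,\delta_0(dt)=\chi_{_{\{1\}}}(n)$, subtracting $\mu(\{0\})\delta_0$ for one implication and adding $\epsilon\delta_0$ plus invoking the uniqueness of representing measures (Proposition~\ref{unique}) for the other. The only cosmetic difference is that the paper phrases the first implication as a contradiction (assuming minimality and $\mu(\{0\})>0$) and packages the addition of $\epsilon\delta_0$ via Proposition~\ref{prop-DMS-properties} and Example~\ref{Exm-DMS-1}(c), whereas you do both steps directly; your explicit check of total masses before applying Proposition~\ref{unique} is a nice touch but, since $j=1$ here, is automatic.
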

	\begin{proof}
If $\mathbf w$ is a minimal Hausdorff log-moment sequence with a representing measure $\mu$ such that $\mu(\{0\}) \textgreater 0,$ then	
\beqn w_{1} - \mu(\{0\}) = \mu((0,1]), \quad 
		w_{n} = \int_{(0,1]} t^{log(n)} \mu(dt), \quad n \Ge 2.
\eeqn
		Hence $\mathbf w - \mu(\{0\})\chi_{\{1\}}$ is a Hausdorff log-moment sequence with the representing measure $\widehat{\mu}$ obtained by extending $\mu$ trivially to $[0, 1]$: 
		\beq \label{mu-hat-def} 
\text{$\widehat{\mu}(\sigma)  = \mu(\sigma \setminus \{0\})$ for every Borel subset $\sigma$ of $[0, 1].$}
\eeq
		This contradicts our assumption that $\mathbf w$ is a minimal Hausdorff log-moment sequence.

Conversely, suppose that $\mathbf w$ is a Hausdorff log-moment sequence with a representing measure $\mu.$ 
		If there exists $\epsilon \textgreater 0$ such that $\mathbf w_\epsilon = \mathbf w - \epsilon \chi_{\{1\}}$ is a Hausdorff log-moment sequence with a representing measure $\nu$, then by	Proposition~\ref{prop-DMS-properties} and Example~\ref{Exm-DMS-1}, $\mathbf w_\epsilon 
+ \epsilon \chi_{\{1\}}$ is a Hausdorff log-moment sequence with a representing measure $\nu + \epsilon \delta_0.$ By Proposition~\ref{unique}, $\mu$ must coincide with $\nu + \epsilon \delta_0,$ and hence $\mu(\{0\}) = \nu(\{0\}) + \epsilon > 0.$ 
		\end{proof}	 
		\begin{remark} \label{minimal-rmk}
		 Proposition~\ref{minimal} suggests that for any integer $j \Ge 2,$ any Hausdorff log-moment sequence $\mathbf w = \{w_{n}\}_{n \Ge j}$ can be considered as a minimal Hausdorff log-moment sequence. Indeed, if $\mu$ is a representing measure of $\mathbf w$ and $\widehat{\mu}$ is the trivial extension of $\mu|_{(0, 1]}$ $($see \eqref{mu-hat-def}$),$ 
		  then since for $n \geqslant j > 1,$ $t^{\log(n)}|_{t=0} = 0,$ after replacing $\mu$ by $\widehat{\mu}$ if necessary, we may assume that $\mu(\{0\})=0.$ We used here the convention that $0 \cdot \infty =0.$
\end{remark}

%\begin{corollary} Let $\mathbf w = \{w_{n}\}_{n\geqslant 1}$ be a sequence of non-negative real numbers. Let $\mathcal D[s]$ denote the real linear space of finite Dirichlet series with real coefficients. 
%	Let $\mathcal D_{+}[s]$ denote the set of all functions in $\mathcal D[s]$ which are non-negative on $[0, \infty)$. 
%	Consider the real linear functional $L_{\mathbf w}: \mathcal D[s] \longrightarrow \mathbb{R}$ defined by $L_{\mathbf w}(n^{-s}) = w_{n},$ $n \geqslant 1,$ and extended linearly to $\mathcal D[s].$ Assume that $L_{\mathbf w}$ is a $\mathcal D_+[s]$-positive linear functional. 
%Then $L_{\mathbf w}$ is a moment functional if and only if $\mathbf w$ is a minimal Hausdorff log-moment sequence.
%\end{corollary}
%\begin{proof} By Theroem~\ref{Q1-DS-Coro}, $L_{\mathbf w}$ is given by \eqref{L-w}. If $\nu$ is the Radon measure as appearing in \eqref{L-w}, then by Corollary~\ref{rmk-Q1}, $L_{\mathbf w}$ is a moment functional if and only if $w_1 = \nu([0, \infty)).$ Thus in view 
%\end{proof}
		
We need the following corollary in the proof of Theorem~\ref{thm-char-DMS}.
		\begin{corollary} \label{minimal-coro} Let 
 $\mathbf w = \{w_{n}\}_{n \Ge 1}$ be a sequence of non-negative real numbers. Then the following statements are valid: 
 \begin{enumerate}
\item[(i)] if $\mathbf w$ is a Hausdorff log-moment sequence with the representing measure $\mu,$ then the sequence 
$\mathbf w -  \mu(\{0\}) \, \chi_{_{\{1\}}}$ is a minimal Hausdorff log-moment sequence with representing measure $\widehat{\mu}$ given by \eqref{mu-hat-def},
\item[(ii)] if there exists a real number $c \in [0, w_1]$ such that $\mathbf w -  c \, \chi_{_{\{1\}}}$ is a Hausdorff log-moment sequence with representing measure $\nu,$ then $\mathbf w$ is a Hausdorff log-moment sequence with representing measure $\nu + c \delta_0,$ where $\delta_0$ denote the atomic measure with point mass at $\{0\}.$ 
\end{enumerate}
		\end{corollary}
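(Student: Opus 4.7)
The plan is to verify both assertions by direct computation, relying on the key observation that for every integer $n \geqslant 2$, the integrand $t^{\log(n)}$ vanishes at $t=0$. Hence modifying the representing measure at the single point $\{0\}$ only affects the value $w_1$, and the two statements are each a bookkeeping of atoms.

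For (i), I would begin by splitting the representing measure as $\mu = \mu(\{0\})\delta_{0} + \widehat{\mu}$, where $\widehat{\mu}$ is the trivial extension of $\mu|_{(0,1]}$ from \eqref{mu-hat-def}. Substituting this decomposition into the defining integral $w_{n} = \int_{[0,1]} t^{\log(n)}\mu(dt)$ and isolating the mass at $0$ yields $w_{1} = \mu(\{0\}) + \widehat{\mu}([0,1])$, while for $n \geqslant 2$ the Dirac contribution drops out and $w_{n} = \int_{[0,1]} t^{\log(n)}\widehat{\mu}(dt)$. Reading these two identities off side by side shows exactly that $\mathbf w - \mu(\{0\})\chi_{_{\{1\}}}$ is a Hausdorff log-moment sequence with representing measure $\widehat{\mu}$. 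Since $\widehat{\mu}(\{0\}) = 0$ by construction, Proposition~\ref{minimal} immediately delivers minimality, finishing (i).

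For (ii), the argument runs in the reverse direction. Assuming $\mathbf w - c\chi_{_{\{1\}}}$ has representing measure $\nu$, I would add the atom $c\delta_{0}$ and verify that $\nu + c\delta_{0}$ represents $\mathbf w$ by integrating $t^{\log(n)}$ against it. For $n=1$ one obtains $\int_{[0,1]}(\nu+c\delta_{0})(dt) = (w_{1}-c) + c = w_{1}$, while for $n \geqslant 2$ the atom contributes $c \cdot 0^{\log(n)} = 0$, giving $\int_{[0,1]} t^{\log(n)}(\nu + c\delta_{0})(dt) = w_{n} - 0 = w_{n}$. This is exactly the defining integral representation of $\mathbf w$ with representing measure $\nu + c\delta_{0}$.

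There is no substantive obstacle here: both halves are direct verifications that rest on the dichotomy between $n=1$ and $n \geqslant 2$ behaviour of $t^{\log(n)}$ at the origin. The only convention worth flagging is $0 \cdot \infty = 0$ invoked in Remark~\ref{minimal-rmk}, although in the present context $\mu(\{0\})$ is automatically finite by Remark~\ref{rmk-cone}(1) (since $j=1$, forcing $\mu$ to be a finite measure with total mass $w_{1}$), so no ambiguity actually arises.
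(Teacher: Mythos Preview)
Your proof is correct and follows essentially the same approach as the paper. For (i) the paper carries out the identical splitting and integral computation and then invokes Proposition~\ref{minimal}; for (ii) the paper phrases the same verification by citing Example~\ref{Exm-DMS-1}(c) (that $\chi_{\{1\}}$ has representing measure $\delta_0$) together with the cone property in Proposition~\ref{prop-DMS-properties}, which is just your direct computation packaged into earlier results.
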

		\begin{proof}
(i) 	Assume that $\{w_{n}\}_{n \geqslant 1}$ is a Hausdorff log-moment sequence. By Proposition~\ref{Q1-DS}, there exists a finite Radon measure $\mu$ on $[0,1]$ such that
$w_{n} = \int_{[0, 1]}t^{\log(n)} \mu(dt)$ for every integer $n \geqslant 1.$  
%Let $v_0 := \mu(\{0\})$ and Thus $w_{1} = \mu([0,1]).$ 
%Define a positive Borel measure $\mu_0$ concentrated on $[0, 1]$ by setting 
%\begin{equation} \label{measure-mu-0}
%\text{$\mu_0(\sigma)  = \mu(\sigma \setminus \{0\})$ for every Borel subset $\sigma$ of $[0, 1].$}
%\end{equation} 
Let $\widehat{\mu}$ be as defined in \eqref{mu-hat-def}. Note that $\widehat{\mu}(\{0\})=0$
and $\widehat{\mu}$ is a Radon measure that satisfies 
\beqn
w_n &=& \int_{[0, 1]} t^{\log(n)} \mu(dt) \\
&=& \begin{cases} \mu(\{0\})  + \int_{[0, 1]} t^{\log(n)} \widehat{\mu}(dt) & \mbox{if~}n=1, \\
 \int_{[0, 1]} t^{\log(n)} \widehat{\mu}(dt) & \mbox{if~} n \geqslant 2.\\
 \end{cases}
\eeqn		
Thus the sequence $\mathbf w -  \mu(\{0\}) \, \chi_{_{\{1\}}}$ is a Hausdorff log-moment sequence with representing measure $\widehat{\mu}.$ Since $\widehat{\mu}(\{0\})=0,$ by Proposition~\ref{minimal}, $\mathbf w -  \mu(\{0\}) \, \chi_{_{\{1\}}}$ is a minimal Hausdorff log-moment sequence. 

(ii) Assume that the sequence $\mathbf w -  c \, \chi_{_{\{1\}}}$ is a Hausdorff log-moment sequence for some real number $c \in [0, w_1].$ Since the sequence $\chi_{\{1\}}$ is also a Hausdorff log-moment sequence with representing measure $\delta_0$ (see Example~\ref{Exm-DMS-1}(c)), the desired conclusion is now immediate from Proposition~\ref{prop-DMS-properties}.  
		\end{proof}

	\section{Proof of Theorem~\ref{thm-char-DMS}}

 We need a couple of facts to complete the proof of Theorem~\ref{thm-char-DMS}. 

\begin{lemma} \label{lem-char-DMS}
For a positive integer $j,$ let $\{w_{n}\}_{n \geqslant j}$ be a Hausdorff log-moment sequence with a representing measure $\mu$ concentrated on $[0, 1].$ If $\mu(\{0\})=0,$ then 
there exists a unique completely monotone function $f : [\log(j),\infty) \rar [0, \infty)$ such that 
\beq \label{1.2}
w_n = f(\log(n)),    \quad n \geqslant j.
\eeq
If this happens, then the representing measure of $f$ is equal to $\varphi_*\mu$ $($see \eqref{phi-psi}$).$
\end{lemma}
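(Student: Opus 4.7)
The plan is to construct $f$ explicitly as the Laplace transform of $\varphi_*\mu,$ verify complete monotonicity by differentiation under the integral sign, and then deduce uniqueness by combining Bernstein's theorem with Carlson--Fuchs's uniqueness theorem for holomorphic functions of exponential type on the right half-plane.

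\textbf{Existence.} Since $\mu(\{0\}) = 0,$ the measure $\mu$ is effectively supported on $(0,1],$ where $\varphi$ is a homeomorphism onto $[0,\infty).$ Applying Lemma~\ref{CoV}(i) with $\widetilde{K} = (0,1]$ and $\lambda = \log(n)$ yields
\[
w_n \;=\; \int_{(0,1]} t^{\log(n)}\,\mu(dt) \;=\; \int_{[0,\infty)} e^{-s\log(n)}\,\varphi_*\mu(ds), \qquad n \geqslant j.
\]
I would then define $f:[\log(j),\infty)\to[0,\infty)$ by $f(x) = \int_{[0,\infty)} e^{-sx}\,\varphi_*\mu(ds).$ Because the integrand is monotonically decreasing in $x$ and $f(\log(j)) = w_j < \infty,$ the integral converges absolutely on $[\log(j),\infty),$ with local domination by $e^{-s\log(j)}$ near any $x > \log(j).$ Standard dominated-convergence arguments justify differentiation under the integral on $(\log(j),\infty),$ giving
\[
(-1)^k f^{(k)}(x) \;=\; \int_{[0,\infty)} s^k e^{-sx}\,\varphi_*\mu(ds) \;\geqslant\; 0, \qquad k \geqslant 0,
\]
while continuity at $\log(j)$ follows from monotone convergence. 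Hence $f$ is completely monotone on $[\log(j),\infty)$ and satisfies \eqref{1.2}.

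\textbf{Uniqueness.} Suppose $g:[\log(j),\infty)\to[0,\infty)$ is also completely monotone with $g(\log(n)) = w_n$ for $n \geqslant j.$ Applying Bernstein's theorem to the shift $x \mapsto g(x+\log(j))$ on $[0,\infty)$ produces a finite positive Borel measure $\tau$ on $[0,\infty)$ with $g(x+\log(j)) = \int e^{-sx}\,\tau(ds);$ equivalently, $g(x) = \int e^{-sx}\,\sigma(ds)$ where $d\sigma(s) = e^{s\log(j)}\,d\tau(s)$ is a Radon measure with $\int e^{-s\log(j)}d\sigma = w_j.$ The difference
\[
F(z) \;:=\; f(z+\log(j)) - g(z+\log(j)) \;=\; \int_{[0,\infty)} e^{-sz}\bigl(e^{-s\log(j)}\varphi_*\mu - e^{-s\log(j)}\sigma\bigr)(ds)
\]
is the Laplace transform of a finite signed measure; hence it is holomorphic on the open right half-plane $\Re(z) > 0,$ bounded by $2w_j$ on the closed half-plane $\Re(z)\geqslant 0,$ and vanishes at $\log(n/j)$ for every $n \geqslant j.$ Since
\[
\sum_{n>j}\frac{\log(n/j)}{1+\log(n/j)^2} \;\sim\; \sum_{n>j}\frac{1}{\log(n)} \;=\; +\infty,
\]
the zero set violates the Blaschke/density hypothesis required by Carlson--Fuchs's uniqueness theorem, which forces $F \equiv 0.$ By uniqueness of the Laplace transform of finite measures this yields $\sigma = \varphi_*\mu,$ and consequently $f = g$ together with the asserted identification of the representing measure of $f.$

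\textbf{Expected obstacle.} The substantive step is the uniqueness argument. Two issues require care: first, deriving the correct exponential-type bound for $F$ in the right half-plane from the finiteness of $w_j$ and the boundedness of Laplace transforms of finite signed measures on $\Re(z)\geqslant 0;$ second, matching the density/Blaschke condition on the zero sequence $\{\log(n/j)\}_{n\geqslant j}$ to the precise hypothesis of the specific Carlson--Fuchs-type theorem invoked (from \cite{Bo,Fu}). Once these are in place, the existence part is essentially a computation, and the claim that $\varphi_*\mu$ is the representing measure of $f$ reads off from the construction.
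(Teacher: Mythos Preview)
Your proof is correct. The existence half is essentially the paper's: both define $f$ as the Laplace transform of $\varphi_*\mu$ via Lemma~\ref{CoV}(i); you verify complete monotonicity by differentiating under the integral, while the paper cites Widder. The uniqueness half, however, follows a genuinely different route. The paper precomposes each analytic extension $F_k(z)=\int e^{-zt}\nu_k(dt)$ with a holomorphic branch of $z\mapsto\log(z+j)$, producing bounded holomorphic functions $H_k$ on $\mathbb{H}_0$ satisfying $H_k(n)=w_{n+j}$ at the non-negative \emph{integers}; classical Carlson (\cite[Corollary~9.5.4]{Bo}) then forces $H_1\equiv H_2$. You instead translate by $\log j$ and work directly with the zero set $\{\log(n/j):n>j\}$, appealing to the divergence of the Blaschke sum $\sum_{n>j}\log(n/j)/(1+\log(n/j)^2)\asymp\sum 1/\log n=\infty$. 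The paper's composition trick buys an immediate appeal to the classical integer-zero Carlson theorem with no density computation; your argument avoids the auxiliary change of variable but requires the $H^\infty(\mathbb{H}_0)$ Blaschke criterion (or Fuchs's density generalization) in its place. One terminological refinement that would dissolve your ``expected obstacle'': since your $F$ is genuinely bounded on $\overline{\mathbb{H}}_0$, the cleanest citation is the Blaschke zero-set condition for $H^\infty$ of the half-plane rather than Carlson--Fuchs per se; once phrased that way there is no exponential-type hypothesis to verify beyond boundedness.
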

\begin{proof} 
Assume that $\mu(\{0\})=0.$ By Lemma~\ref{CoV}(i) (with $\varphi$ given by \eqref{phi-psi}), we obtain 
\beqn w_{n} =  \int_{[0, \infty)} e^{-\log(n) s}\varphi_*\mu(ds), \quad n\geqslant j.
%= \int_{0}^{\infty} n^{-2s}d\nu(s)
\eeqn
%Note that $N_j \Le e^{\lambda/2}$ for every $\lambda \Ge 2N_j.$ Thus 
%\beqn
%\sup_{s \in (0, \infty)} e^{(2\log(n)-\lambda) s} < \infty.
%\eeqn
Since $\mu|_{(0, 1]}$ is a Radon measure, 
$\varphi_*\mu$ is a Radon measure concentrated on $[0, \infty)$ (see \cite[Proposition~2.1.15]{BCR}). 
One may now define the function $f : [\log(j),\infty) \rar [0, \infty)$ by 
\begin{equation*} 
%\label{rep-f-rep}
f(\lambda) = \int_{[0, \infty)} e^{-\lambda s}\varphi_*\mu(ds), \quad \lambda \in [\log(j), \infty).
\end{equation*}
Thus the representing measure of $f$ is equal to $\varphi_*\mu.$
Note that $f(\lambda) \Le w_{j}$ 
and hence $f$ is well-defined. Clearly, $f(\log(n)) = w_{n}$ for every $n \geqslant j.$ By \cite[Theorem~IV.12b]{W} and the remark prior to \cite[Theorem~IV.12c]{W}], $f$ is completely monotone. This completes the proof of the existence of $f.$ 

To see the uniqueness of $f,$ for $k=1, 2,$ consider the completely monotone function  $f_k : [\log(j),\infty) \rar [0, \infty)$ satisfying $f_k(\log(n))=w_n$ for every integer $n \geqslant j.$ Fix $k=1, 2$ and let $\nu_k$ be a positive Radon measure satisfying 
\begin{equation*} 
f_k(\lambda) = \int_{[0, \infty)} e^{-\lambda s}\nu_k(ds), \quad \lambda \in [\log(j), \infty).
\end{equation*}
For a real number $a,$ let $\mathbb H_a$ denote the right half plane $\{z \in \mathbb C : \Re(z) > a\}.$ 
We now define a
function $F_k$ on the closed right half plane $\overline{\mathbb{H}}_{\log(j)}$ by setting
\beqn 
F_k(z)=\int_{[0, \infty)} e^{-zt}\nu_k(dt), \quad z \in \overline{\mathbb H}_{\log(j)}.
\eeqn 
Since $|e^{-z t}|=e^{-\Re(z) t} \leq e^{-\log(j) t}$ for all $z \in \overline{\mathbb H}_{\log(j)}$ and $t \in [0, \infty),$ by the dominated convergence theorem,
$F_k$ is continuous  on $\overline{\mathbb H}_{\log(j)}.$ By theorems of Fubini and Morera,
it is easily seen that $F_k$ is holomorphic in $\mathbb H_{\log(j)}$ (cf. \cite[Proposition~4.1]{AC}).
Let $L$ be a holomorphic branch of $\log$ defined on the right half plane $\mathbb H_j$ and define $H_k : \mathbb H_0 \rar \mathbb C$ by 
\beqn
H_k(z)= F_k (L(z + j)), \quad z \in \mathbb H_0.
\eeqn
Note that $H_k$ is a well-defined holomorphic function. Since $F_k$ is bounded, so is $H_k.$ 
%define $g : [0, \infty) \rar [0, \infty)$ by $g(x)=2\log(x+N_j),$ $x \in [0, \infty).$ Since $g([0, \infty)) \subseteq [2\log(N_j), \infty),$ the composition $h :=f \circ g$ is well-defined on $[0, \infty).$
%Note that $g$ is a Bernstein function, and hence by a \textcolor{red}{variant} of \cite[Theorem~3.6]{SSV}, \uwam{add in the introduction}
%$h$ is completely monotone. 
%Thus $h$ extends to the bounded holomorphic function $H$ defined on the right half plane ${\mathbb H}=\{z \in \mathbb C : \Re(z) > 0\}.$ 
Since $f_k(\log(n))=w_n$ for every integer $n \geqslant j,$ 
\begin{equation*}
H_k(n)=F_k(L (n+j)) =f_k (\log(n+j))  = w_{n+j}, \quad n \geqslant 0.
\end{equation*}
Clearly, $H_1-H_2$ is a bounded holomorphic function on $\mathbb H_0$ satisfying 
\beqn
\text{$(H_1-H_2)(n)=0$ for every integer $n \geqslant 0.$}
\eeqn 
Hence, by \cite[Corollary~9.5.4]{Bo}, $H_1-H_2$ is identically zero (cf. \cite[Theorem~9.2.1]{Bo}). It follows that  $f_1 (\log(x+j)) =f_2 (\log(x+j))$ on $[0, \infty).$ Since $x \mapsto \log(x+j)$ is a bijection from $[0, \infty)$ onto $[\log(j), \infty),$ $f_1=f_2.$  This completes the proof of the uniqueness part.
%Define $\hat{f} : [0, \infty) \rar [0, \infty)$ by $\hat{f}(x)=f(x+2\log(N_j)),$ $x \in [0, \infty).$
%Define $g : [0, \infty) \rar [0, \infty]$ by $g(x)=2\log(x+N_j),$ $x \in [0, \infty),$ and let $h =\hat{f} \circ g$ be defined on $[0, \infty).$
%Note that $g$ is a Bernstein function, and hence by \cite[Theorem~3.6]{SSV}, $h$ is completely monotone. 
%One may now argue as in the proof of \cite[Proposition~4.1]{AC} to see that $h$ extends to a holomorphic function $H$ defined on the right half plane $\mathbb H_{0}=\{z \in \mathbb C : \Re(z) > 0\}.$ To see the uniqueness of $f,$ assume that $f(2\log(n))=0$ for every integer $n \geqslant N_j.$ Then $H(n)=0$ for every integer $n \geqslant 0,$ and hence,  by a theorem of Carlson (see \cite[p.~161]{W}), $H$ is identically zero. This completes the proof.
\end{proof}
We also need a converse of Lemma~\ref{lem-char-DMS}.
\begin{lemma} \label{rmk-fact}
For a positive integer $j,$ let $\{w_{n}\}_{n \geqslant j}$ be a sequence of non-negative real numbers.  
If there exists a completely monotone function $f : [\log(j),\infty) \rar [0, \infty)$ such that \eqref{1.2} holds, then $\{w_{n}\}_{n \geqslant j}$ is a Hausdorff log-moment sequence and $\{w_{n+j}\}_{n \geqslant 0}$ is a minimal completely monotone sequence.
\end{lemma}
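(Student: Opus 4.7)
The plan is to invert Bernstein--Widder's Laplace-transform representation of $f$ and then apply the change-of-variables in Lemma~\ref{CoV}.

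First, I would pass to the shifted function $\widetilde{f}(x) := f(x + \log j),$ which is completely monotone on $[0, \infty).$ By the Bernstein--Widder theorem (see \cite[Theorem~IV.12b]{W}), there is a positive Radon measure $\sigma$ on $[0, \infty)$ with $\widetilde{f}(x) = \int_{[0, \infty)} e^{-xs}\,\sigma(ds).$ Setting $\rho(ds) := e^{s\log j}\,\sigma(ds)$ defines a positive Radon measure on $[0, \infty)$ (the density is continuous, hence locally integrable against $\sigma$), and
\begin{equation*}
f(\lambda) \,=\, \int_{[0, \infty)} e^{-\lambda s}\,\rho(ds), \quad \lambda \geqslant \log j.
\end{equation*}
Substituting $\lambda = \log n$ and invoking Lemma~\ref{CoV}(ii) with $\psi(s) = e^{-s}$ gives
\begin{equation*}
w_n \,=\, \int_{[0, \infty)} n^{-s}\,\rho(ds) \,=\, \int_{(0, 1]} t^{\log n}\,\psi_*\rho(dt), \quad n \geqslant j.
\end{equation*}
Extending $\psi_*\rho$ trivially to $[0, 1]$ by declaring the mass of $\{0\}$ to be zero produces a positive Borel measure $\mu$ on $[0, 1]$ whose restriction to $(0, 1]$ is Radon and with respect to which $\{w_n\}_{n \geqslant j}$ is a Hausdorff log-moment sequence (with $\mu(\{0\}) = 0$).

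For the second assertion I would consider $h : [0, \infty) \rightarrow [0, \infty)$ defined by $h(x) := f(\log(x + j)).$ The map $x \mapsto \log(x + j)$ is a Bernstein function: it is non-negative on $[0, \infty),$ and its derivative $(x + j)^{-1}$ is completely monotone. Since the composition of a completely monotone function with a Bernstein function is again completely monotone (see \cite{SSV}), $h$ is completely monotone on $[0, \infty).$ Bernstein--Widder now furnishes a positive Radon measure $\tau$ on $[0, \infty)$ with $h(x) = \int_{[0, \infty)} e^{-xs}\,\tau(ds);$ sampling at $x = n$ and applying Lemma~\ref{CoV}(ii) yields
\begin{equation*}
w_{n+j} \,=\, h(n) \,=\, \int_{(0, 1]} t^n\,\psi_*\tau(dt), \quad n \geqslant 0.
\end{equation*}
The Hausdorff representing measure of $\{w_{n+j}\}_{n \geqslant 0}$ is therefore concentrated on $(0, 1],$ hence has no atom at $0;$ equivalently, the sequence extends to the completely monotone function $h$ on $[0, \infty),$ which is the minimality condition for a completely monotone sequence.

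The only non-routine ingredient is the composition rule (completely monotone $\circ$ Bernstein is completely monotone), which is standard; everything else amounts to change-of-variables bookkeeping and verification that the constructed measures are Radon.
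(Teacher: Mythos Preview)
Your proof is correct and follows essentially the same route as the paper's: the change of variables via $\psi$ for the Hausdorff log-moment claim, and the composition rule (completely monotone $\circ$ Bernstein, with $g(x)=\log(x+j)$) for the minimality claim. The only cosmetic differences are that you explicitly manufacture the Laplace representation of $f$ on $[\log j,\infty)$ by shifting and applying Bernstein--Widder (the paper simply starts from ``the representing measure $\nu$'' of $f$), and you phrase minimality via the absence of an atom at $0$ rather than invoking \cite[Theorem~IV.14b]{W} directly.
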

\begin{proof}
Let $f : [\log(j),\infty) \rar [0, \infty)$ be a completely monotone function with the representing measure $\nu$ concentrated on $[0, \infty)$ and assume that \eqref{1.2} holds. By Lemma~\ref{CoV}(ii) (with $\psi$ given by \eqref{phi-psi}), we obtain 
\beqn f(\lambda) =  \int_{(0, 1]} t^{\lambda} \psi_*\nu(dt), \quad \lambda \in [\log(j),\infty).
%= \int_{0}^{\infty} n^{-2s}d\nu(s)
\eeqn
 It is clear from \eqref{1.2} that $\{w_{n}\}_{n \geqslant j}$ is a Hausdorff log-moment sequence.
%One may now apply the solution of the Hausdorff moment problem (see \cite[Chapter IV, Proposition 6.11]{BCR}) to complete the proof.

%If there is a completely monotone function $f : [\log(j),\infty) \rar [0, \infty)$ such that \eqref{1.2} holds, then $\{w_{n+j}\}_{n \geqslant 0}$ is a minimal completely monotone sequence. 
To prove the remaining part, 
define $g : [0, \infty) \rar [0, \infty)$ by $$g(x)=\log(x+j), \quad x \in [0, \infty),$$ and note that $g$ is a Bernstein function. Hence, by \cite[Theorem~3.6]{SSV}, ${f} \circ g : [0, \infty) \rar [0, \infty)$ is a completely monotone function. It now follows from \cite[Theorem~IV.14b]{W} that the sequence $\{f \circ g(n)\}_{n \geqslant 0}$ is a minimal completely monotone sequence. However, $f \circ g(n) = w_{n+j}$ for $n \geqslant 0$ completing the verification.
\end{proof}

\begin{proof}[Proof of Theorem~\ref{thm-char-DMS}]
(i) Assume that $\mathbf w = \{w_{n}\}_{n \geqslant 1}$ is a Hausdorff log-moment sequence with the representing measure $\mu.$ 
%By Proposition~\ref{Q1-DS}, there exists a finite Radon measure $\mu$ on $[0,1]$ such that
%$w_{n} = \int_{[0, 1]}t^{\log(n)} \mu(dt)$ for every integer $n \geqslant 1.$  
%%Let $v_0 := \mu(\{0\})$ and Thus $w_{1} = \mu([0,1]).$ 
%%Define a positive Borel measure $\mu_0$ concentrated on $[0, 1]$ by setting 
%%\begin{equation} \label{measure-mu-0}
%%\text{$\mu_0(\sigma)  = \mu(\sigma \setminus \{0\})$ for every Borel subset $\sigma$ of $[0, 1].$}
%%\end{equation} 
%Let $\mu_0$ be as defined in \eqref{mu-0-def} and note that $\mu_0(\{0\})=0.$
%By Lemma~\ref{mu-0-lem}, $\mu_0$ is a Radon measure that satisfies 
%\beqn
%w_n &=& \int_{[0, 1]} t^{\log(n)} \mu(dt) \\
%&=& \begin{cases} \mu(\{0\})  + \int_{(0, 1]} t^{\log(n)} \mu_0(dt) & \mbox{if~}n=1, \\
% \int_{(0, 1]} t^{\log(n)} \mu_0(dt) & \mbox{if~} n \geqslant 2.\\
% \end{cases}
%\eeqn		
By Corollary \ref{minimal-coro}(i), the sequence $\mathbf w -  \mu(\{0\}) \, \chi_{_{\{1\}}}$ is a minimal Hausdorff log-moment sequence with a representing measure $\widehat{\mu}$ (see \eqref{mu-hat-def}). 
%Hence, by Proposition~\ref{minimal}, $\nu(\{0\})=0.$
By Lemma~\ref{lem-char-DMS} (with $j=1$), there exists a unique completely monotone function $f : [0,\infty) \rar [0, \infty)$ such that 
\begin{equation*} 
w_n  - \mu(\{0\})\, \chi_{_{\{1\}}}(n) = f(\log(n)),    \quad n \geqslant 1.
\end{equation*}
Since $w_1 - f(0) =\mu(\{0\}),$ we have $w_1 \geqslant f(0),$ and
we obtain the necessity part. To see the sufficiency part, note that by \eqref{DMS-formula-1} and Lemma~\ref{rmk-fact}, the sequence $\mathbf w -  (w_1-f(0)) \, \chi_{_{\{1\}}}$ is a Hausdorff log-moment sequence. One may now apply Corollary \ref{minimal-coro}(ii).  

(ii)  
This follows from Remark~\ref{minimal-rmk} and Lemmas~\ref{lem-char-DMS} and \ref{rmk-fact}. 

The remaining part follows from Lemma~\ref{lem-char-DMS}. 
%Thus it suffices to check only the sufficiency part. To see this, let $f$ be a  completely monotone function $f : [2\log(j),\infty) \rar [0, \infty)$ such that \eqref{rep-f} holds for a Radon measure $\nu$ supported on $[0, \infty)$ and assume that \eqref{formula-DMS} holds. Let $\psi :  [0, \infty) \longrightarrow (0, 1]$ be given by $\psi(t) = e^{-2t}.$ Then, after substituting $s = - \frac{\log(t)}{2}$ in \eqref{rep-f}, we obtain 
%\beqn f(\lambda) =  \int_{0}^1 t^{\lambda/2} d\mu(s), \quad \lambda \in \overline{\mathbb H}_{2\log(N_j)}, 
%%= \int_{0}^{\infty} n^{-2s}d\nu(s)
%\eeqn
%where  $\mu=\psi_*\nu$ is the push-forward of $\nu$ by $\psi.$ The desired conclusion now follows from \ref{formula-DMS}.
\end{proof}

If $\mathbf w$ is a Hausdorff log-moment sequence with the completely monotone function $f$ satisfying either \eqref{DMS-formula-1} ($j=1$) or \eqref{DMS-formula-2} ($j \Ge 2$), then we refer to $(\mathbf w, f)$ as the {\it Dirichlet pair}. 

\begin{example}[Example~\ref{Exm-DMS-1} continued] \label{Exa-DMS-2}
Let us see some examples of Dirichlet pairs $(\mathbf w, f).$
%Let us analyze \eqref{DMS-formula-1} and \eqref{DMS-formula-2} with a reference to Example~\ref{Exm-DMS-1}:
%Theorem~\ref{thm-char-DMS} with a reference to Example~\ref{Exm-DMS-1}.
\begin{itemize} 
\item[(a)] For $\alpha < 0,$ consider a function $f_\alpha : [\log(2),\infty) \longrightarrow[0,\infty)$ given by $f(\lambda) = \lambda^{\alpha},$  $\lambda \in [\log(2), \infty).$ Then $f_\alpha$ is a  completely monotone function with the representing measure $\nu_\alpha(dt) =\frac{t^{-1-\alpha}}{\Gamma(-\alpha)}(dt)$, that is,
\begin{equation*}
f_\alpha(\lambda) =	\int_{[0,\infty)} e^{-\lambda t} \nu_\alpha(dt), \quad \lambda \in [\log(2), \infty).
\end{equation*}
%Note that  $f=f_\alpha$ and $\mathbf w = \{(\log n)^{\alpha}\}_{n \geqslant 2}$ satisfy \eqref{DMS-formula-2}. 
If $\mathbf w_\alpha = \{(\log(n))^{\alpha}\}_{n \geqslant 2},$ then $(\mathbf w_\alpha, f_\alpha)$ is a Dirichlet pair.
\item[(b)] For $\alpha > 0,$ consider a function $f_{\alpha}: [0,\infty) \longrightarrow [0,\infty)$ given by $f_\alpha(\lambda) = \frac{1}{\lambda+\alpha},$  $\lambda \in [0, \infty).$ Then $f_\alpha$ is a completely monotone function with the representing measure $\nu_\alpha(dt) = e^{-t \alpha}(dt)$, that is,
\begin{equation*}
f_\alpha(\lambda) =	\int_{[0,\infty)} e^{-\lambda t}\nu_\alpha(dt), \quad \lambda \in [0, \infty).
\end{equation*}
Note that $w_1=f(0).$ Thus if $\mathbf w 
= \big\{\frac{1}{\log(n) + \alpha}\big\}_{n \geqslant 1},$ 
 then $(\mathbf w_\alpha, f_\alpha)$ is a Dirichlet pair.
\item[(c)] For $\alpha \in (0,1],$ consider a function $f_\alpha: [0,\infty) \longrightarrow [0,\infty)$ given by $f_\alpha(\lambda) = \alpha^{\lambda},$ $\lambda \in [0, \infty).$ Then $f_\alpha$ is a  completely monotone function with the representing measure $\nu_\alpha (dt)= \delta_{\{-\log(\alpha)\}}(dt)$, that is,
\begin{equation*}
f_\alpha(\lambda) =	\int_{[0,\infty)} e^{-\lambda t}\nu_\alpha(dt), \quad  \lambda \in [0, \infty).
\end{equation*}
Note that $w_1=f(0).$ Thus, if $\mathbf w_\alpha = \{\alpha^{\log(n)}\}_{n \geqslant 1},$ then $(\mathbf w_\alpha, f_\alpha)$ is a Dirichlet pair.
%$f=f_\alpha$ and $\mathbf w = \{\alpha^{\log(n)}\}_{n \geqslant 1}$ satisfy \eqref{DMS-formula-1}.  
 Moreover, $(\chi_{_{\{1\}}}, 0)$ is also a Dirichlet pair $($the case of $\alpha =0).$
 \hfill $\diamondsuit$
 % $f=0$ and $\mathbf w = \chi_{_{\{1\}}}$ satisfy \eqref{DMS-formula-1} $($the case of $\alpha =0).$
\end{itemize}
\end{example}

Let us see an application of Hausdorff log-moment sequences to the theory of Helson matrices. Following \cite{PP}, we say that a matrix $\big(a_{m, n}\big)_{m, n = 1}^{\infty}$
is a {\it Helson matrix} if there exists a sequence $\mathbf w = \{w_n\}_{n=1}^{\infty}$ such that 
\beqn
a_{m, n} = w_{mn}, \quad m, n \Ge 1.
\eeqn
In this case, the matrix $\big(a_{m, n}\big)_{m, n = 1}^{\infty}$ is denoted by $M(\mathbf w).$
By \cite[Theorem~5.1]{PP} together with the discussion prior to it, the Helson matrix $M(\mathbf w)$ defines a bounded linear operator on $\ell^2(\mathbb Z_+)$  provided $\mathbf w \in \ell^2(\mathbb Z_+)$ satisfies
\beqn
w_n \Le \frac{C}{\sqrt{n} \log(n)}, \quad n \Ge 2.
\eeqn
This combined with Theorem~\ref{thm-char-DMS}(i) yields the following:
\begin{corollary} \label{coro-Helson}
Assume that $j=1$ and let $(\mathbf w, f)$ be a Dirichlet pair such that $\mathbf w \in \ell^2(\mathbb Z_+)$ and $w_1=f(0).$ If there exists a positive real number $C$ such that
\beqn
xf(x) \Le C e^{-x/2}, \quad x \in [0, \infty),
\eeqn then the Helson matrix $M(\mathbf w)$ defines a bounded linear operator on $\ell^2(\mathbb Z_+).$
\end{corollary}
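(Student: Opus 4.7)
The plan is to reduce the hypothesis to the exact bound needed to invoke \cite[Theorem~5.1]{PP} (as cited in the discussion preceding the corollary).

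First, I would unpack what a Dirichlet pair $(\mathbf w, f)$ with $j = 1$ and $w_1 = f(0)$ tells us about the sequence. By Theorem~\ref{thm-char-DMS}(i), we have
\begin{equation*}
w_n = (w_1 - f(0))\,\chi_{_{\{1\}}}\!(n) + f(\log(n)), \quad n \geqslant 1.
\end{equation*}
Since $w_1 = f(0)$, the indicator term vanishes and we are left with the clean identity $w_n = f(\log(n))$ for every $n \geqslant 1$.

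Next, I would apply the growth hypothesis $x f(x) \leqslant C e^{-x/2}$ with the substitution $x = \log(n)$ for each integer $n \geqslant 2$. This substitution is legitimate because $\log(n) \in [0, \infty)$ precisely when $n \geqslant 1$, and it is strictly positive for $n \geqslant 2$, so we can divide by it. We obtain
\begin{equation*}
\log(n)\, f(\log(n)) \leqslant C e^{-\log(n)/2} = \frac{C}{\sqrt{n}}, \quad n \geqslant 2,
\end{equation*}
which, after dividing by $\log(n) > 0$, rearranges to
\begin{equation*}
w_n = f(\log(n)) \leqslant \frac{C}{\sqrt{n}\,\log(n)}, \quad n \geqslant 2.
\end{equation*}

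Finally, I would simply invoke \cite[Theorem~5.1]{PP}, which is already stated in the paragraph immediately preceding the corollary: for $\mathbf w \in \ell^2(\mathbb Z_+)$ satisfying $w_n \leqslant \frac{C}{\sqrt{n}\,\log(n)}$ for $n \geqslant 2$, the Helson matrix $M(\mathbf w)$ defines a bounded linear operator on $\ell^2(\mathbb Z_+)$. Both hypotheses are now in place: $\mathbf w \in \ell^2(\mathbb Z_+)$ is assumed, and the pointwise bound has just been verified. There is no real obstacle here; the corollary is essentially a translation of the hypothesis on $f$ (phrased in the ``analytic'' variable $x = \log n$) into the sequence bound used by Perfekt--Pushnitski. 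The only subtlety worth flagging is the role of the normalization $w_1 = f(0)$, which is what allows us to drop the indicator term and guarantees the identification $w_n = f(\log(n))$ throughout, rather than only for $n \geqslant 2$.
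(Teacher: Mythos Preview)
Your proof is correct and follows exactly the approach the paper intends: the paper simply states that the corollary follows by combining the cited bound from \cite[Theorem~5.1]{PP} with Theorem~\ref{thm-char-DMS}(i), and you have spelled out precisely that combination, including the substitution $x=\log(n)$ and the role of the normalization $w_1=f(0)$.
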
  
The forgoing corollary can be used to check that the Helson matrix $M = \left[\alpha^{\log(mn)}\right]_{m,n = 1}^{\infty}$ 
%\beqn M &=& \begin{pmatrix}
%1 & \alpha^{\log(2)} & \cdots &  &   &   &   &  \\
%\alpha^{\log(2)} & \alpha^{\log(4)}  & \cdots &   &   &   &  \\
%\vdots &  \cdots &  &   &   &   &  \\
%\alpha^{\log(m)} & \cdots  & \alpha^{\log(2m)} & \cdots &  & &\\
%\vdots & \cdots &  & \alpha^{\log(m(m+1))} &   \cdots &    &\\
% % & & \vdots &  0 & \lambda_{mm+2}  &   & \\
%% & & &  \vdots & 0  &  \lambda_{mm+3} & \\
% & & &  &   \vdots &   & \ddots
% \end{pmatrix} \eeqn
defines a bounded linear operator on $\ell^2(\mathbb Z_+)$ for any $\alpha \in (0,\frac{1}{\sqrt{e}}).$  
In view of Corollary~\ref{coro-Helson}, this is immediate from Example~\ref{Exa-DMS-2}(c) and the fact that
the function $g(\lambda) = \lambda \alpha^{\lambda} e^{\frac{\lambda}{2}},$ $\lambda \in [0,\infty)$ is a bounded function.

As another application of Theorem~\ref{thm-char-DMS}, we decipher the relation between Hausdorff log-moment sequences and completely monotone sequences.
\begin{corollary}  \label{coro-CM}
For a positive integer $j,$ let $\mathbf w =\{w_{n}\}_{n \geqslant j}$ be a Hausdorff log-moment sequence with a representing measure $\mu.$ Then $\{w_{n+j}\}_{n \geqslant 0}$ is a completely monotone sequence. Moreover, the following statements are valid:
\begin{enumerate}
\item[(i)] if $j=1,$ then   
the representing measure $\nu$ of the completely monotone sequence $\{w_{n+1}\}_{n \geqslant 0}$ is given by
\beq \label{rep-mre-nu}
	\nu(d\sigma)		= \int_{[0,\infty)} \int_{\psi^{-1}(\sigma \backslash \{0\})}  \!\! \left(\frac{\lambda^{s-1} e^{-t}}{\Gamma(s)}\right)dt ~\varphi_* \widehat{\mu}(ds)+\mu(\{0\}) \delta_{0}(d\sigma)
		\eeq 
	for every Borel subset $\sigma$ of $[0, 1],$ where $\varphi,$ $\psi$ are as given in Lemma~\ref{CoV}, and $\widehat{\mu}$ is given by \eqref{mu-hat-def},
	\item[(ii)] if $j \geqslant 2,$ then $\{w_{n+j}\}_{n \geqslant 0}$ is a minimal completely monotone sequence.
		\end{enumerate}
\end{corollary}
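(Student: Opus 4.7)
The plan is to apply Theorem~\ref{thm-char-DMS} to extract the completely monotone function $f$ attached to $\mathbf w$ and to transfer complete monotonicity to the shifted sequence via the Bernstein function $g(x)=\log(x+j)$ on $[0,\infty)$. Since $g'(x)=1/(x+j)$ is completely monotone, $g$ is Bernstein, so by \cite[Theorem~3.6]{SSV} the composition $f\circ g:[0,\infty)\to[0,\infty)$ is completely monotone, and by \cite[Theorem~IV.14b]{W} the sequence $\{(f\circ g)(n)\}_{n\Ge 0}$ is a minimal completely monotone sequence; this is essentially the content of Lemma~\ref{rmk-fact}, which I may invoke directly.

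For part (ii) with $j\Ge 2$, Theorem~\ref{thm-char-DMS}(ii) gives $w_{n+j}=f(\log(n+j))=(f\circ g)(n)$ for all $n\Ge 0$, so the minimal completely monotone claim follows at once. For case $j=1$, identity \eqref{DMS-formula-1} yields
\begin{equation*}
w_{n+1}=(f\circ g)(n)+(w_1-f(0))\,\chi_{\{0\}}(n),\quad n\Ge 0,
\end{equation*}
where $\chi_{\{0\}}(n)$ equals $1$ when $n=0$ and $0$ otherwise. Since $w_1-f(0)=\mu(\{0\})\Ge 0$ (Theorem~\ref{thm-char-DMS}(i)) and $\{\chi_{\{0\}}(n)\}_{n\Ge 0}$ is completely monotone with representing measure $\delta_0$, the sum $\{w_{n+1}\}_{n\Ge 0}$ is completely monotone, being a non-negative linear combination of two completely monotone sequences.

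For the formula \eqref{rep-mre-nu} in part (i), I would begin with $f(\lambda)=\int_{[0,\infty)}e^{-\lambda s}\varphi_*\widehat{\mu}(ds)$ (Lemma~\ref{lem-char-DMS}), specialise to $\lambda=\log(n+1)$, and expand via the Gamma identity
\begin{equation*}
(n+1)^{-s}=\frac{1}{\Gamma(s)}\int_{0}^{\infty}t^{s-1}e^{-t}e^{-nt}\,dt,\quad s>0.
\end{equation*}
Fubini--Tonelli (justified by non-negativity of the integrand), together with the convention $1/\Gamma(0)=0$, rewrites $(f\circ g)(n)$ as $\int_0^\infty e^{-nt}K(t)\,dt$, where $K(t):=\int_{(0,\infty)}\tfrac{t^{s-1}e^{-t}}{\Gamma(s)}\,\varphi_*\widehat{\mu}(ds)$. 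The change of variables $u=\psi(t)=e^{-t}$ then exhibits $\{(f\circ g)(n)\}_{n\Ge 0}$ as the sequence of moments on $[0,1]$ of the push-forward $\psi_*(K(t)\,dt)$, whose value on a Borel set $\sigma\subseteq[0,1]$ equals $\int_{\psi^{-1}(\sigma\setminus\{0\})}K(t)\,dt$---the double integral in \eqref{rep-mre-nu} after one further Fubini interchange. Adding the $\mu(\{0\})\delta_0$-contribution coming from the residual summand $(w_1-f(0))\chi_{\{0\}}$ completes the identification of $\nu$.

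The main obstacle will be the careful bookkeeping in part (i): verifying the Fubini--Tonelli interchanges at each step, absorbing any $s=0$ atom of $\varphi_*\widehat{\mu}$ through the $1/\Gamma(0)=0$ convention, and correctly translating the Lebesgue density $K(t)\,dt$ on $[0,\infty)$ back to a Borel measure on $[0,1]$ whose moments reproduce $(f\circ g)(n)$ for every $n\Ge 0$.
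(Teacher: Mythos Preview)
Your proposal is correct and, for the complete monotonicity claim and part~(ii), follows the paper's proof verbatim: Theorem~\ref{thm-char-DMS} supplies the completely monotone $f$, the Bernstein function $g(x)=\log(x+j)$ is composed with it, and Lemma~\ref{rmk-fact} (equivalently \cite[Theorem~3.6]{SSV} plus \cite[Theorem~IV.14b]{W}) transfers this to the shifted sequence; for $j=1$ one then adds back the $\mu(\{0\})\chi_{\{0\}}$ summand exactly as you do.

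For the explicit formula \eqref{rep-mre-nu} the paper takes a different route. Rather than the direct Gamma-integral computation you outline, it invokes \cite[Theorem~7.2]{SS}, which identifies the representing measure of a composition $f\circ g$ as the mixture $\eta(\sigma)=\int_{[0,\infty)}\nu_s(\sigma)\,\varphi_*\widehat{\mu}(ds)$ over the convolution semigroup $(\nu_s)_{s\Ge 0}$ attached to $g$; here $\nu_s(dt)=\tfrac{t^{s-1}e^{-t}}{\Gamma(s)}\,dt$ for $s>0$ and $\nu_0=\delta_0$. Your argument is essentially an ad hoc proof of this special case, so it is more self-contained but requires the Fubini bookkeeping you flag, whereas the paper's citation hides that work. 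One caution on your handling of the $s=0$ atom: it cannot be ``absorbed'' by the convention $1/\Gamma(0)=0$, since at $s=0$ the contribution to $(f\circ g)(n)$ is the constant $\varphi_*\widehat{\mu}(\{0\})=\widehat{\mu}(\{1\})$, which corresponds (after the push-forward by $\psi$) to a point mass at $1$ on $[0,1]$; the paper's semigroup formulation makes this explicit via $\nu_0=\delta_0$.
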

\begin{proof}
 Assume that $j=1.$ By Theorem~\ref{thm-char-DMS}(i), 
 there exists a completely monotone function $f : [0,\infty) \rar [0, \infty)$ such that 
\beq  \label{perturbed-moment}
w_n -\mu(\{0\})\, \chi_{_{\{1\}}}(n) = f(\log(n)),    \quad n \geqslant 1.
\eeq
This combined with Lemma~\ref{rmk-fact} yields that $\{w_{n+1} - \mu(\{0\}) \chi_{\{1\}}(n+1)\}_{n \geqslant 0}$ is completely monotone.  Since the sequence $\{\chi_{\{1\}}(n+1)\}_{n \Ge 0}$ is also a completely monotone sequence, by the fact that the set of completely monotone sequences is a cone (see \cite[p.~130]{BCR}), $\{w_{n+1}\}_{n \geqslant 0}$ is completely monotone.   
To see the remaining part, note that by 
Corollary~\ref{minimal-coro}(i), the Hausdorff log-moment sequence $\mathbf w - \mu(\{0\})\, \chi_{_{\{1\}}}$ is a minimal Hausdorff log-moment sequence with the representing measure $\widehat{\mu}$ (see \eqref{mu-hat-def}).
Hence, by Lemma~\ref{lem-char-DMS}, the representing measure of $f$ is given by $\varphi_* \widehat{\mu}$ (see \eqref{phi-psi}). 
We now apply \cite[Theorem~7.2]{SS} to $f$ and the Bernstein function $g(x)=\log(x+1),$ $x \in [0, \infty),$ to conclude that the representing measure $\eta$ of the completely monotone function $f \circ g$ is given by  
\beq \label{zeta-mre}
\eta(\sigma) = \int_{[0, \infty)} \nu_s(\sigma) \varphi_* \widehat{\mu}(ds)~\mbox{for every Borel subset}~\sigma~\mbox{of~}[0, \infty),
\eeq
where $\nu_s$ (the so-called {\it convolution semi-group} of $g$) is the representing measure of the completely monotone function $e^{-s g},$ $s \Ge 0.$ Since $e^{-s g(x)} = \frac{1}{(x+1)^{s}}$ for $x \geqslant 0,$ the measure $\nu_{s}$ is the weighted Lebesgue measure given by
		\beq \label{semig}
		\nu_{s}(dt) = \begin{cases} \frac{t^{s-1} e^{-t}}{\Gamma(s)}dt & \mbox{if~} s \in (0, \infty), \\
		\delta_0 & \mbox{if~}s=0.
		\end{cases}
		\eeq
By \eqref{perturbed-moment} and Lemma~\ref{CoV}(ii), the representing measure of the completely monotone sequence $\{w_{n+1} -\mu(\{0\})\chi_{\{1\}}(n+1)\}_{n \geqslant 0}$ is $\widehat{\psi_* \eta}$ (see \eqref{mu-hat-def}). 
By Corollary~\ref{minimal-coro}, the representing measure of $\{w_{n+1}\}_{n \geqslant 0}$ is $\widehat{\psi_* \eta} + \mu(\{0\})\delta_0.$
%given by
%\beq \label{nu-mre}
%\nu(\sigma) = \begin{cases} \psi_* \zeta(\sigma) & \mbox{if~}0 \notin \sigma, \\
%\psi_* \zeta(\sigma) + \mu(\{0\}) & \mbox{if~} 0 \in \sigma,
%\end{cases}
%\eeq
%where $\sigma$ is a Borel subset of $[0, 1].$ 
This combined with \eqref{zeta-mre} and \eqref{semig} yields \eqref{rep-mre-nu}.
%Assume that $j \geqslant 2.$ To compute the representing measure of $\{w_{n+j}\}_{n \geqslant 0},$ note that by Theorem~\ref{thm-char-DMS}, there exists a unique completely monotone function $f : [2\log(j),\infty) \rar [0, \infty)$ such that 
%\begin{equation} 
%w_n = f(2 \log(n)),    \quad n \geqslant j.
%\end{equation}
%We now apply \cite[Theorem~7.2]{SS} to the completely monotone function $g : (0, \infty) \rar [0, \infty)$ given by $g(x)= f(x+2\log(j))$ and the Bernstein function $h : (0, \infty) \rar [0, \infty)$ given by $h(x)=\log(x+j).$  

 If $j \geqslant 2,$ then by Theorem~\ref{thm-char-DMS}(ii) and Lemma~\ref{rmk-fact},  $\{w_{n+j}\}_{n \geqslant 0}$ is a minimal completely monotone sequence. This yields (ii).
\end{proof}

%The following yields a minimal completely monotone sequence $\{w_{n+j}\}_{n \geqslant 0}$ for which
%$\{w_n\}_{n \geqslant j}$ is not a Dirichlet moment sequence.
\begin{example} For a positive integer $j,$ let $\mathcal{CM}_j$ and $\mathcal{DM}_j$ denote the cones of completely monotone sequences $\{a_{n+j}\}_{n \Ge 0}$ and Hausdorff log-moment sequences $\{a_{n}\}_{n \Ge j}$  respectively. By Corollary~\ref{coro-CM}, $\mathcal{DM}_j$ $\subseteq$ $\mathcal{CM}_j$ for every positive integer $j.$ Moreover, this inclusion is strict: $$\mathcal{DM}_j \subsetneq \mathcal{CM}_j, \quad j \Ge 1.$$
Indeed, for any integer $j \Ge 1,$ 
 $\big\{\frac{1}{n+j+1}\big\}_{n \geqslant 0}$ is a minimal completely monotone sequence with the representing measure $x^{j}dx.$ On the other hand, since the matrix $A = \big(\frac{1}{pq+1}\big)_{p,q = j}^{j+1}$ has determinant less than $0,$ 
by Proposition~\ref{prop-DMS-properties-1}(i), $\big\{\frac{1}{n+1}\big\}_{n \geqslant j}$ is not a Hausdorff log-moment sequence. \hfill $\diamondsuit$
\end{example}

We have already seen that Hausdorff log-moment sequence $\{w_{n}\}_{n \Ge 1}$ is determinate (see Proposition~\ref{unique}). Surprisingly, for any integer $j \Ge 2,$ a Hausdorff log-moment sequence $\{w_{n}\}_{n \Ge j}$ is almost determinate in the following sense:
\begin{corollary} \label{determinate}
 	Let $j \Ge 2$ be a positive integer. If $\{w_{n}\}_{n \geq j}$ is a Hausdorff log-moment sequence, then the restriction of its representing measure to $(0, 1]$ is uniquely determined. 
 \end{corollary}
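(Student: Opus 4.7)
The plan is to reduce to the uniqueness of the Laplace representing measure of a completely monotone function, using Theorem~\ref{thm-char-DMS}(ii) as the bridge from the Hausdorff log-moment world to the Laplace-transform world.

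First, I would exploit the hypothesis $j \geq 2$. For every $n \geq j$ one has $\log(n) > 0$, so the integrand $t^{\log(n)}$ vanishes at $t = 0$, and consequently (under the convention $0 \cdot \infty = 0$ already in force in Remark~\ref{minimal-rmk}) the moments $w_n$ are insensitive to any mass a representing measure places at $\{0\}$. Hence, given two representing measures $\mu_1, \mu_2$ of $\{w_n\}_{n \geq j}$, I would pass to the trivial extensions $\widehat{\mu_1}, \widehat{\mu_2}$ of $\mu_i|_{(0,1]}$ to $[0,1]$ (see \eqref{mu-hat-def}); both are still representing measures and now satisfy $\widehat{\mu_i}(\{0\}) = 0$.

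Next, I would invoke Theorem~\ref{thm-char-DMS}(ii), which furnishes a unique completely monotone function $f : [\log(j), \infty) \to [0, \infty)$ with $f(\log(n)) = w_n$ for $n \geq j$, and whose closing sentence identifies the representing measure of $f$ as $\varphi_* \widehat{\mu_i}$ for $i = 1, 2$, where $\varphi(t) = -\log(t)$ is the bijection from $(0,1]$ onto $[0,\infty)$ appearing in \eqref{phi-psi}. Since the Laplace representing measure of a completely monotone function is uniquely determined (the Bernstein--Widder fact already used in the proof of Lemma~\ref{lem-char-DMS}), I would conclude that $\varphi_* \widehat{\mu_1} = \varphi_* \widehat{\mu_2}$.

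Finally, applying the inverse homeomorphism $\psi(s) = e^{-s}$ of $\varphi$, I would push this equality back to $(0,1]$, obtaining $\widehat{\mu_1} = \widehat{\mu_2}$ on $(0,1]$ and hence $\mu_1|_{(0,1]} = \mu_2|_{(0,1]}$, which is precisely the desired conclusion. I do not foresee a substantial obstacle: the hypothesis $j \geq 2$ is exactly what makes the point $\{0\}$ invisible to the moment data, after which the argument is a direct transport of Bernstein--Widder uniqueness through the change of variables $\varphi$.
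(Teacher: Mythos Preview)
Your proposal is correct and follows essentially the same approach as the paper's own proof: invoke Theorem~\ref{thm-char-DMS}(ii) to obtain the unique completely monotone function $f$, use Bernstein's uniqueness for its Laplace representing measure to conclude $\varphi_*\mu_1 = \varphi_*\mu_2$, and then pull back via the bijection $\varphi$ to get $\mu_1|_{(0,1]} = \mu_2|_{(0,1]}$. Your explicit passage through $\widehat{\mu_i}$ is a slightly more careful handling of the point $\{0\}$ than the paper's own write-up, but the argument is otherwise identical.
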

 \begin{proof}
 	Let $\{w_{n}\}_{n \geq j}$ be a Hausdorff log-moment sequence with two representing measures $\mu$ and $\nu.$ 
 	%By Remark~\ref{minimal-rmk}, we may assume that
 %$\mu(\{0\})=0$	and $\nu(\{0\})=0.$ 
 By Theorem~\ref{thm-char-DMS}, there exists a unique completely monotone function $f : [\log(j),\infty) \longrightarrow \mathbb{R}$ with representing measure $\varphi_*\mu$ (see \eqref{phi-psi}) such that $f(\log(n)) = w_{n}$ for every integer $n \geqslant j.$
Note that representing measure of a completely monotone function $f : [\log(j), \infty) \rar [0, \infty)$ is unique. This fact may be derived from Bernstein's Theorem (see \cite[Theorem~1.4]{SSV}) by considering the completely function on $[0, \infty)$ given by $s \mapsto f(s+\log(j)),$ $s \in [0, \infty).$ We
may now conclude that $\varphi_*\mu$ and $\varphi_*\nu$ must agree on $[0, \infty).$ Since $\varphi : (0, 1] \rar [0, \infty)$ is a bijection, we obtain $\mu|_{_{(0, 1]}} = \nu|_{_{(0, 1]}}$ completing the proof. 
\end{proof}
 
% Remark : Above theorem highlights that any two representng measure must agree on (0,1]. and throughout the discussion we use the convention $0.\infty=0$, and then assume that $\{w_{n}\}_{n \geq j}, j \geq 2.$ is a minimal Dirichlet moment sequence. Hence we are altering the measure at a single point, so it may not be unique.  

We conclude the paper with some remarks revealing the relationship between Hausdorff log-moment sequences $\mathbf w=\{w_n\}_{n \Ge 1}$ and the associated linear functional $L_{\mathbf w}.$ 
Note that the conclusion of Proposition~\ref{prop-DMS-properties-1} can be rephrased in terms of the functional $L_{\mathbf w}$ to derive the following necessary conditions for a sequence $\mathbf w$ to be a Hausdorff log-moment sequence.
%This together with Proposition~\ref{prop-DMS-properties-1} yields the following:
\begin{proposition} \label{fact-end}
If $\{w_n\}_{n \Ge 1}$ is a Hausdorff log-moment sequence, then 
\beq \label{Cond-1}
%L_{\mathbf w}(q^2) & \Ge 0 & \quad q \in \mathcal D[s], \\  \label{Cond-2}
L_{\mathbf w}(k^{-s}q^2) &\Ge 0,& \quad q \in \mathcal D[s], ~k \Ge 1.
\eeq
\end{proposition}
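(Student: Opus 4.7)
The plan is to rephrase the positivity in Proposition~\ref{prop-DMS-properties-1}(i) as a statement about $L_{\mathbf w}$ and then combine it with the shift invariance in Proposition~\ref{prop-DMS-properties-1}(iii). For any $q \in \mathcal{D}[s]$, write $q(s) = \sum_{i=1}^{N} a_i n_i^{-s}$ with real coefficients $a_i$ and positive integers $n_i$, and expand
\begin{equation*}
k^{-s} q(s)^2 = \sum_{i,j=1}^{N} a_i a_j (k n_i n_j)^{-s} \in \mathcal{D}[s].
\end{equation*}
Applying $L_{\mathbf w}$ term-by-term then gives
\begin{equation*}
L_{\mathbf w}(k^{-s} q^2) = \sum_{i,j=1}^{N} a_i a_j w_{k n_i n_j},
\end{equation*}
so \eqref{Cond-1} is exactly the assertion that the matrix $(w_{k n_i n_j})_{i,j=1}^{N}$ is positive semi-definite for every choice of $a_i$ and $n_i$.

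Next, by Proposition~\ref{prop-DMS-properties-1}(iii) the shifted sequence $\{w_{kn}\}_{n \Ge 1}$ is itself a Hausdorff log-moment sequence (with representing measure $t^{\log(k)} \mu(dt)$). Applying Proposition~\ref{prop-DMS-properties-1}(i) to this shifted sequence with the finite index set $F = \{n_1, \ldots, n_N\}$ yields precisely the positive semi-definiteness of $(w_{k n_i n_j})_{i,j=1}^{N}$, completing the argument.

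There is no real obstacle here — the proof is purely formal bookkeeping once one notices that multiplying Dirichlet monomials corresponds to multiplying the indices. As a sanity check, a direct one-line verification is available via the integral representation: substituting $w_{k n_i n_j} = \int_{[0,1]} t^{\log(k n_i n_j)} \mu(dt)$ and factoring $t^{\log(k)}$ out of the sum yields
\begin{equation*}
L_{\mathbf w}(k^{-s} q^2) = \int_{[0,1]} t^{\log(k)} \Bigl(\sum_{i=1}^{N} a_i t^{\log(n_i)}\Bigr)^{2} \mu(dt) \Ge 0,
\end{equation*}
the integrand being manifestly non-negative on $[0,1]$. A third, even more compact route is to invoke Corollary~\ref{Coro-HLM} together with the observation that $k^{-s} q(s)^2$ belongs to $\mathcal{D}^{[0,\infty)}_{+}[s]$, since $k^{-s} > 0$ and $q(s)^2 \Ge 0$ on $[0, \infty)$.
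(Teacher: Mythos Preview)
Your proof is correct and follows essentially the same route as the paper's: apply Proposition~\ref{prop-DMS-properties-1}(iii) to see that $\{w_{kn}\}_{n\Ge 1}$ is a Hausdorff log-moment sequence, then invoke Proposition~\ref{prop-DMS-properties-1}(i) to obtain positive semi-definiteness of $(w_{kn_in_j})_{i,j}$, which unwinds to $L_{\mathbf w}(k^{-s}q^2)\Ge 0$. Your explicit expansion of $k^{-s}q^2$ and the two alternative verifications (the direct integral computation and the appeal to Corollary~\ref{Coro-HLM}) go beyond what the paper records, but the core argument is the same.
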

\begin{proof} 
%Note that \eqref{Cond-1}  is equivalent to 
%\beq (w_{pq})_{p, q \in F} ~\mbox{is positive semi-definite for every finite subset}~ F ~\mbox{of}~ \mathbb Z_+. 
%\eeq 
%Hence, by Proposition~\ref{prop-DMS-properties-1}(i), we obtain \eqref{Cond-1}.  
To see \eqref{Cond-1}, note that
by Proposition~\ref{prop-DMS-properties-1}(iii), for every integer $k\geqslant 1,$ $\{w_{kn}\}_{n \geqslant 1}$ is a Hausdorff log-moment sequence. Hence, by Proposition~\ref{prop-DMS-properties-1}(i),
$(w_{kpq})_{p, q \in F}$ is positive semi-definite for every finite subset $F$ of $\mathbb Z_+.$ This is easily seen to be equivalent to \eqref{Cond-1} completing the proof.
\end{proof}
We do not know whether the conditions \eqref{Cond-1}  ensure that the sequence $\mathbf w$ is a Hausdorff log-moment sequence (cf. \cite[Theorem~3.12]{Sc} and \cite[Theorem~3.3]{PP}).  In particular, one may ask for a counterpart of \cite[Proposition 3.2]{Sc} for the Dirichlet polynomials (cf. the discussion following \cite[Theorem~3.3]{PP}). In the unavailability of a required "Positivstellensatz'', it seems desirable to find a counterpart of the method employed in \cite[Theorem~2.3]{Va} for Dirichlet polynomials. We believe these questions warrant
 additional attention.

\section*{Appendix: Second proof of Theorem~\ref{Q1-DS-Coro}}

%Especially,  since the subspace $\{q \in \mathcal D[s] : q(0)=0\}$ of continuous functions on $[0, \infty)$ vanishing at $\infty$ is dense and appealing to analogues of Stone-Weierstrass theorem and Riesz-Markov theorem for locally compact spaces. 

It seems that there could be a direct proof of Theorem~\ref{Q1-DS-Coro}, which resembles a part of the solution of Hausdorff moment problem  (see \cite[Proposition~4.17.7]{Si-1}). 
Interestingly, there is an alternate proof of Theorem~\ref{Q1-DS-Coro} of topological flavour and the idea of the proof has been known in the literature (see, for instance, \cite[Proofs of Theorems~9.15 and 9.19]{Sc} and \cite[Proof of Proposition~2.1]{PS}). 

\begin{proof}[Second proof of Theorem~\ref{Q1-DS-Coro}]
We only verify the implication (i)$\Rightarrow$(ii). In view of \cite[Proposition~1.9]{Sc}, we may assume that $K$ is unbounded. Assume that $L_{\mathbf w}$ is $\mathcal D_{+}[s]$-positive. 
Let $[0, \infty]$ denote the Alexandroff one-point compactification of $[0, \infty)$ (which is always Hausdorff) and let $C([0, \infty], \mathbb R)$ denote the space of real-valued continuous functions on $[0, \infty].$ 
Let $\mathcal S$ denote the real linear space of functions $F : [0, \infty] \rar \mathbb R$ given by 
\beqn
\mathcal S= \{F \in C([0, \infty], \mathbb R) ~|~ \text{there exists a}~f \in \mathcal D[s]~\text{such that}~F|_{[0, \infty)} = f\}.
\eeqn
Note that for every $F \in \mathcal S,$ there exists a unique $f \in \mathcal D[s]$ such that $F|_{[0, \infty)} = f$ and $F(\infty) = \lim_{s \rar \infty}f(s).$
Moreover, if $f \in \mathcal D[s],$ then $F$ given by
\beqn
F(s) = \begin{cases}  f(s) & \mbox{if~} s \in [0, \infty), \\
\lim_{s \rar \infty}f(s) & \mbox{if~} s  = \infty,
\end{cases}
\eeqn
defines an element in $\mathcal S.$

Define a linear functional $M_{\mathbf w} : \mathcal S \rar \mathbb R$ by $M_{\mathbf w}(F) = L_{\mathbf w}(f).$ Let $\overline{K}$ denote the closure of $K$ in $[0, \infty].$ 
By assumption, $M_{\mathbf w}$ is $\mathcal S^{\overline{K}}_+[s]$-positive, and hence by  \cite[Proposition~1.9]{Sc}, there exists a positive Radon measure $\nu$ on $\overline{K}$ such that
\beqn 
M_{\mathbf w}(F) = \int_{\overline{K}} F(s) \nu(ds), \quad F \in \mathcal D^{\infty}[s].
\eeqn	
It follows from the discussion in the previous paragraph that for any $f \in \mathcal D[s],$ there exists $F \in \mathcal S$ such that 
\beqn
L_{\mathbf w}(f) &=& \int_{\overline{K}} F(s) \nu(ds) \\
&=& \nu(\{\infty\})\, F(\infty) + \int_{K} f(s) \nu(ds) \\
&=& (w_1 - \nu(K))\lim_{s \rar \infty}f(s) + \int_{K} f(s) \nu(ds),
\eeqn
where we used the equality that $w_1 = \nu(\overline{K})$ $($which can be seen by arguing as in the proof of Theorem~\ref{Q1-DS-Coro}$)$ and fact that $\overline{K} = K \sqcup \{\infty\}$ (disjoint union).
This completes the proof once we notice that $\nu$ is a finite measure.
\end{proof}
\end{document}